\title[Measuring the influence of the $k$th largest variable]{Measuring the influence of the $k$th largest variable on functions over the unit hypercube}
\author{Jean-Luc Marichal}
\address{Mathematics Research Unit, FSTC, University of Luxembourg, 6, rue Coudenhove-Kalergi, L-1359 Luxembourg, Grand Duchy of Luxembourg.}
\email{jean-luc.marichal[at]uni.lu}
\author{Pierre Mathonet}
\address{Mathematics Research Unit, FSTC, University of Luxembourg, 6, rue Coudenhove-Kalergi, L-1359 Luxembourg, Grand Duchy of Luxembourg.}
\email{pierre.mathonet[at]uni.lu}
\date{March 12, 2010}
\begin{document}

\theoremstyle{plain}
\newtheorem{theorem}{Theorem}
\newtheorem{lemma}[theorem]{Lemma}
\newtheorem{proposition}[theorem]{Proposition}
\newtheorem{corollary}[theorem]{Corollary}
\newtheorem{fact}[theorem]{Fact}
\newtheorem*{main}{Main Theorem}

\theoremstyle{definition}
\newtheorem{definition}[theorem]{Definition}
\newtheorem{example}[theorem]{Example}

\theoremstyle{remark}
\newtheorem*{conjecture}{\indent Conjecture}
\newtheorem{remark}{Remark}
\newtheorem{claim}{Claim}

\newcommand{\N}{\mathbb{N}}
\newcommand{\R}{\mathbb{R}}
\newcommand{\I}{\mathbb{I}}
\newcommand{\Vspace}{\vspace{2ex}}
\newcommand{\bfx}{\mathbf{x}}
\newcommand{\bfy}{\mathbf{y}}
\newcommand{\bfh}{\mathbf{h}}
\newcommand{\bfe}{\mathbf{e}}
\newcommand{\Q}{Q}
\newcommand{\os}{\mathrm{os}}

\begin{abstract}
By considering a least squares approximation of a given square integrable function $f\colon [0,1]^n\to\R$ by a shifted $L$-statistic function (a
shifted linear combination of order statistics), we define an index which measures the global influence of the $k$th largest variable on $f$. We
show that this influence index has appealing properties and we interpret it as an average value of the difference quotient of $f$ in the
direction of the $k$th largest variable or, under certain natural conditions on $f$, as an average value of the derivative of $f$ in the
direction of the $k$th largest variable. We also discuss a few applications of this index in statistics and aggregation theory.
\end{abstract}

\keywords{Order statistic, least squares approximation, difference operator, aggregation function, robustness}

\subjclass[2010]{Primary 41A10, 62G30, 93E24; Secondary 39A70, 62G35}

\maketitle

\section{Introduction}

Consider a real-valued function $f$ of $n$ variables $x_1,\ldots,x_n$ and suppose we want to measure a global influence degree of every variable
$x_i$ on $f$. A reasonable way to define such an influence degree consists in considering the coefficient of $x_i$ in the best least squares
approximation of $f$ by affine functions of the form
$$
g(x_1,\ldots,x_n)=c_0+\sum_{i=1}^n c_i x_i.
$$
This approach was considered in \cite{HamHol92,MarMatb} for pseudo-Boolean functions\footnote{An alternative (but equivalent) definition of influence index was previously considered for Boolean functions in \cite{KahKalLin88} and pseudo-Boolean functions in \cite{Mar00}.} $f\colon\{0,1\}^n\to\R$ and in \cite{MarMat} for square integrable functions
$f\colon [0,1]^n\to\R$. It turns out that, in both cases, the influence index of $x_i$ on $f$ is given by an average ``derivative'' of $f$ with
respect to $x_i$.

Now, it is also natural to consider and measure a global influence degree of the smallest variable, or the largest variable, or even the $k$th
largest variable for some $k\in\{1,\ldots,n\}$. As an application, suppose we are to choose an appropriate aggregation function $f\colon
[0,1]^n\to\R$ to compute an average value of $[0,1]$-valued grades obtained by a student. If, for instance, we use the arithmetic mean function,
we might expect that both the smallest and the largest variables are equally influent. However, if we use the geometric mean function, for which
the value $0$ (the left endpoint of the scale) is multiplicatively absorbent, we might anticipate that the smallest variable is more influent
than the largest one.

Similarly to the previous problem, to define the influence of the $k$th largest variable on $f$ it is natural to consider the coefficient of
$x_{(k)}$ in the best least squares approximation of $f$ by symmetric functions of the form
$$
g(x_1,\ldots,x_n)=a_0+\sum_{i=1}^n a_i x_{(i)},
$$
where $x_{(1)},\ldots,x_{(n)}$ are the order statistics obtained by rearranging the variables in ascending order of magnitude.

In this paper we solve this problem for square integrable functions $f\colon [0,1]^n\to\R$. More precisely, we completely describe the least
squares approximation problem above and derive an explicit expression for the corresponding influence index ({\S}2). We also show that this
index has several natural properties, such as linearity and continuity, and we give an interpretation of it as an average value of the
difference quotient of $f$ in the direction of the $k$th largest variable. Under certain natural conditions on $f$, we also interpret the index
as an average value of the derivative of $f$ in the direction of the $k$th largest variable ({\S}3). We then provide some alternative formulas
for the index to possibly simplify its computation ({\S}4) and we consider some examples including the case when $f$ is the Lov\'asz extension
of a pseudo-Boolean function ({\S}5). Finally, we discuss a few applications of the index ({\S}6).

We employ the following notation throughout the paper. Let $\I^n$ denote the $n$-dimensional unit cube $[0,1]^n$. We denote by $L^2(\I^n)$ the
class of square integrable functions $f\colon \I^n\to\R$ modulo equality almost everywhere. For any $S\subseteq [n]=\{1,\ldots,n\}$, we denote
by $\mathbf{1}_S$ the characteristic vector of $S$ in $\{0,1\}^n$ (with the particular case $\mathbf{0}=\mathbf{1}_{\varnothing}$).

Recall that if the $\I$-valued variables $x_1,\ldots,x_n$ are rearranged in ascending order of magnitude $ x_{(1)}\leqslant\cdots\leqslant
x_{(n)}, $ then $x_{(k)}$ is called the \emph{$k$th order statistic} and the function $\os_k\colon\I^n\to\R$, defined as $\os_k(\bfx)=x_{(k)}$,
is the \emph{$k$th order statistic function}. As a matter of convenience, we also formally define $\os_0\equiv 0$ and $\os_{n+1}\equiv 1$. To
stress on the arity of the function, we can replace the symbols $x_{(k)}$  and $\os_k$ with $x_{k:n}$ and $\os_{k:n}$, respectively. For general
background on order statistics, see for instance \cite{BalRao98,DavNag03}.

Finally, we use the lattice notation $\wedge$ and $\vee$ to denote the minimum and maximum functions, respectively.

\section{Influence index for the $k$th largest variable}

An \emph{$L$-statistic} function is a linear combination of the functions $\os_1,\ldots,\os_n$. A \emph{shifted $L$-statistic} function is a
constant plus an $L$-statistic function. Denote by $V_L$ the set of shifted $L$-statistic functions. Clearly, $V_L$ is spanned by the linearly
independent set
\begin{equation}\label{eq:Base21331}
B=\{\os_1,\ldots,\os_n,\os_{n+1}\}
\end{equation}
and thus is a linear subspace of $L^2(\I^n)$ of dimension $n+1$. For a given function $f\in L^2(\I^n)$, we define the \emph{best shifted
$L$-statistic approximation of $f$} as the function $f_L\in V_L$ that minimizes the distance
$$
\lVert f-g\rVert^2=\int_{\I^n}\big(f(\bfx)-g(\bfx)\big)^2\, d\bfx
$$
among all $g\in V_L$, where $\|\cdot\|$ is the norm in $L^2(\I^n)$ associated with the inner product $\langle
f,g\rangle=\int_{\I^n}f(\bfx)g(\bfx)\, d\bfx$. Using the general theory of Hilbert spaces, we immediately see that the solution of this
approximation problem exists and is uniquely determined by the orthogonal projection of $f$ onto $V_L$. This projection is given by
\begin{equation}\label{eq:a2s3d1as}
f_L=\sum_{j=1}^{n+1}a_j\,\os_j\, ,
\end{equation}
where the coefficients $a_j$ (for $j\in [n+1]$) are characterized by the conditions
\begin{equation}\label{eq:1s23f1s}
\langle f-f_L,\os_i\rangle =0\quad\mbox{for all}\quad i\in [n+1].
\end{equation}
Consider the matrix representing the inner product in the basis (\ref{eq:Base21331}), that is, the square matrix $M$ of order $n+1$ defined by
$(M)_{ij}=\langle \os_i,\os_j\rangle$ for all $i,j\in [n+1]$. Denote also by $\mathbf{b}$ the $(n+1)\times 1$ column matrix defined by
$(\mathbf{b})_i=\langle f,\os_i\rangle$ for all $i\in [n+1]$ and by $\mathbf{a}$ the $(n+1)\times 1$ column matrix defined by
$(\mathbf{a})_j=a_j$ for all $j\in [n+1]$. Using this notation, the unique solution of the approximation problem defined in (\ref{eq:a2s3d1as})
and (\ref{eq:1s23f1s}) is simply given by
\begin{equation}\label{eq:45xycv}
\mathbf{a}=M^{-1}\mathbf{b}.
\end{equation}

To give an explicit expression of this solution, we shall make use of the following formula (see \cite{DavJoh54}). For any integers $1\leqslant
k_1<\cdots <k_m\leqslant n$ and any nonnegative integers $c_1,\ldots,c_m$, we have
\begin{equation}\label{eq:sd87f9}
\int_{\I^n}\prod_{j=1}^mx_{k_j:n}^{c_j}\,
d\bfx=\frac{n!}{\big(n+\sum_{j=1}^mc_j\big)!}\,\prod_{j=1}^m\frac{\big(k_j-1+\sum_{i=1}^jc_i\big)!}{\big(k_j-1+\sum_{i=1}^{j-1}c_i\big)!}\, .
\end{equation}

\begin{lemma}\label{lemma:safadfd456}
For every $i,j\in [n+1]$, we have
\begin{equation}\label{eq:w9rer89}
(M)_{ij} = \frac{\min(i,j)\big(\max(i,j)+1\big)}{(n+1)(n+2)}
\end{equation}
and
\begin{equation}\label{eq:w9r89}
\frac{(M^{-1})_{ij}}{(n+1)(n+2)}=
\begin{cases}
2\, , & \mbox{if $i=j < n+1$},\\
\frac{n+1}{n+2}\, , & \mbox{if $i=j=n+1$},\\
-1\, , & \mbox{if $|i-j|=1$},\\
0\, , & \mbox{otherwise}.
\end{cases}
\end{equation}
\end{lemma}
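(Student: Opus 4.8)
The plan is to establish the two formulas separately. For the entries $(M)_{ij} = \langle \os_i, \os_j\rangle = \int_{\I^n} x_{i:n}\, x_{j:n}\, d\bfx$, I would simply invoke the Davis–Johnson integration formula \eqref{eq:sd87f9}. First I would treat the case $i = j$, where the integrand is $x_{i:n}^2$, so $m = 1$, $k_1 = i$, $c_1 = 2$, and the formula gives $\frac{n!}{(n+2)!}\cdot\frac{(i+1)!}{(i-1)!} = \frac{i(i+1)}{(n+1)(n+2)}$, matching \eqref{eq:w9rer89}. Then for $i < j$ I would apply \eqref{eq:sd87f9} with $m = 2$, $k_1 = i$, $k_2 = j$, $c_1 = c_2 = 1$, obtaining $\frac{n!}{(n+2)!}\cdot\frac{i!}{(i-1)!}\cdot\frac{(j+1)!}{j!} = \frac{i(j+1)}{(n+1)(n+2)}$; by symmetry of $M$ this settles all cases and gives $\frac{\min(i,j)(\max(i,j)+1)}{(n+1)(n+2)}$. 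I must remember to handle the boundary indices $i$ or $j$ equal to $n+1$ separately, since $\os_{n+1}\equiv 1$ is not literally an order statistic function; but there $\langle \os_i,\os_{n+1}\rangle = \int \os_i = \frac{i}{n+1}$ (the $c_1=1$, $m=1$ case), which indeed equals $\frac{i(n+2)}{(n+1)(n+2)} = \frac{\min(i,n+1)(\max(i,n+1)+1)}{(n+1)(n+2)}$ once one checks $\max(i,n+1)+1$ should be read as $n+2$, consistent with the stated formula. The case $i=j=n+1$ gives $\langle\os_{n+1},\os_{n+1}\rangle = 1 = \frac{(n+1)(n+2)}{(n+1)(n+2)}$, again consistent.

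For the inverse, the cleanest route is \emph{not} to invert $M$ from scratch but to verify that the proposed matrix $N$, with $(N)_{ij}$ given by $(n+1)(n+2)$ times the right-hand side of \eqref{eq:w9r89}, satisfies $MN = I_{n+1}$. Since $N$ is tridiagonal, $(MN)_{ik} = \sum_j (M)_{ij}(N)_{jk}$ involves only $j \in \{k-1, k, k+1\}$ (intersected with $[n+1]$), so each entry is an explicit short sum. I would write $(M)_{ij} = \frac{1}{(n+1)(n+2)} m_{ij}$ with $m_{ij} = \min(i,j)(\max(i,j)+1)$ and reduce the claim to $\sum_j m_{ij}\,\tilde n_{jk} = (n+1)(n+2)\,\delta_{ik}$, where $\tilde n_{jk}$ are the integer entries $2, \frac{n+1}{n+2}, -1, 0$ from \eqref{eq:w9r89}. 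The verification splits naturally by the position of $k$: the generic interior case $2 \le k \le n$, and then the edge cases $k = 1$ and $k = n+1$ (the latter being where the anomalous entry $\frac{n+1}{n+2}$ sits and must be checked with care). In each case one distinguishes $i < k-1$, $i = k-1$, $i = k$, $i = k+1$, $i > k+1$ and uses the piecewise definition of $m_{ij}$ accordingly — e.g. for $i < k-1$ all three relevant $m_{ij}$ equal $i(j+1)$ with $j$ ranging over $k-1,k,k+1$, and $i\big(2k - (k-1+1) - (k+1+1)\big)\cdot\frac{1}{\text{(normalization)}}$ telescopes to $0$.

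The main obstacle, such as it is, will be bookkeeping rather than ideas: correctly handling the three "boundary" rows/columns $k \in \{1, n, n+1\}$ where the tridiagonal band is truncated or where the exceptional diagonal value $\frac{n+1}{n+2}$ appears, and making sure the convention $\os_0 \equiv 0$, $\os_{n+1}\equiv 1$ is applied consistently so that formula \eqref{eq:w9rer89} extends to these indices. I expect the interior identity to collapse almost immediately by the telescoping noted above; the real care is in confirming that, say, the $(n+1,n+1)$ entry of $MN$ comes out to $1$ given that $(M)_{n+1,n}$ and $(M)_{n+1,n+1}$ differ in form from the generic pattern. Once the boundary cases are dispatched, $MN = I$ follows, hence $N = M^{-1}$ by uniqueness of inverses, which is \eqref{eq:w9r89}.
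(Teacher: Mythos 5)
Your proposal is correct and matches the paper's (very terse) proof exactly: the entries of $M$ are read off from the David--Johnson formula \eqref{eq:sd87f9} (with the boundary rows involving $\os_{n+1}\equiv 1$ checked separately), and the inverse is verified by direct multiplication against the tridiagonal candidate, which the paper dismisses as ``can be checked easily.'' One bookkeeping slip: since $(M)_{ij}=m_{ij}/\big((n+1)(n+2)\big)$ and $(M^{-1})_{jk}=(n+1)(n+2)\,\tilde n_{jk}$, the factors cancel and the identity to verify is $\sum_j m_{ij}\,\tilde n_{jk}=\delta_{ik}$, not $(n+1)(n+2)\,\delta_{ik}$ --- the telescoping you describe does indeed yield $0$ off the diagonal and $1$ on it.
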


\begin{proof}
The formula for $(M)_{ij} = \langle \os_i,\os_j\rangle$ immediately follows from (\ref{eq:sd87f9}). The formula for $(M^{-1})_{ij}$ can be
checked easily.
\end{proof}

Recall that the central second difference operator is defined for any real sequence $(z_k)_{k\geqslant 1}$ as $\delta_k^2\,
z_k=z_{k+1}-2z_k+z_{k-1}$. For every $k\in [n]$, define the function $g_k\in L^2(\I^n)$ as
\begin{equation}\label{eq:s7d9f}
g_k=-(n+1)(n+2)\,\delta_k^2\,\os_k.
\end{equation}
Using (\ref{eq:45xycv}) and (\ref{eq:w9r89}), we immediately obtain the following explicit forms for the components of $f_L$ in the basis
(\ref{eq:Base21331}).

\begin{proposition}
The best shifted $L$-statistic approximation $f_L$ of a function $f\in L^2(\I^n)$ is given by (\ref{eq:a2s3d1as}), where
\begin{equation}\label{eq:bx3cv43}
a_k=
\begin{cases}
\langle f,g_k\rangle\, , & \mbox{if $k\in [n]$},\\
(n+1)^2\langle f,1\rangle-(n+1)(n+2)\langle f,\os_n\rangle\, , & \mbox{if $k=n+1$}.
\end{cases}
\end{equation}
\end{proposition}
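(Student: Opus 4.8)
The plan is to combine the explicit inverse matrix from Lemma~\ref{lemma:safadfd456} with the defining relation $\mathbf{a}=M^{-1}\mathbf{b}$ from (\ref{eq:45xycv}), where $(\mathbf{b})_i=\langle f,\os_i\rangle$. Fix $k\in[n+1]$. Then $a_k=\sum_{i=1}^{n+1}(M^{-1})_{ki}\,\langle f,\os_i\rangle=\langle f,\sum_{i=1}^{n+1}(M^{-1})_{ki}\,\os_i\rangle$ by linearity of the inner product in its second argument, so the whole problem reduces to identifying the function $h_k:=\sum_{i=1}^{n+1}(M^{-1})_{ki}\,\os_i$ and checking it equals $g_k$ when $k\in[n]$ and equals $(n+1)^2\cdot 1-(n+1)(n+2)\,\os_n$ when $k=n+1$.

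For the case $k\in[n]$, I would read off the nonzero entries of row $k$ of $M^{-1}$ from (\ref{eq:w9r89}). If $1\le k\le n-1$, the three nonzero entries are $(M^{-1})_{k,k-1}=-(n+1)(n+2)$ (when $k\geq 2$), $(M^{-1})_{kk}=2(n+1)(n+2)$, and $(M^{-1})_{k,k+1}=-(n+1)(n+2)$, giving $h_k=(n+1)(n+2)\big(-\os_{k-1}+2\os_k-\os_{k+1}\big)=-(n+1)(n+2)\,\delta_k^2\,\os_k=g_k$ by the definition (\ref{eq:s7d9f}) of $g_k$ and of the central second difference $\delta_k^2 z_k=z_{k+1}-2z_k+z_{k-1}$; here the convention $\os_0\equiv 0$ handles $k=1$ cleanly. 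For $k=n$, row $n$ has entries $(M^{-1})_{n,n-1}=-(n+1)(n+2)$, $(M^{-1})_{nn}=2(n+1)(n+2)$, and $(M^{-1})_{n,n+1}=-(n+1)(n+2)$, so again $h_n=(n+1)(n+2)(-\os_{n-1}+2\os_n-\os_{n+1})=g_n$, and since $\os_{n+1}\equiv 1$ this is consistent with the stated form. Thus $a_k=\langle f,g_k\rangle$ for all $k\in[n]$.

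For $k=n+1$, row $n+1$ of $M^{-1}$ has only two nonzero entries: $(M^{-1})_{n+1,n}=-(n+1)(n+2)$ and $(M^{-1})_{n+1,n+1}=(n+1)(n+2)\cdot\frac{n+1}{n+2}=(n+1)^2$. Hence $h_{n+1}=(n+1)^2\,\os_{n+1}-(n+1)(n+2)\,\os_n=(n+1)^2\cdot 1-(n+1)(n+2)\,\os_n$, using $\os_{n+1}\equiv 1$. Therefore $a_{n+1}=\langle f,h_{n+1}\rangle=(n+1)^2\langle f,1\rangle-(n+1)(n+2)\langle f,\os_n\rangle$, which is exactly the second line of (\ref{eq:bx3cv43}).

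There is essentially no obstacle here: the proposition is a direct bookkeeping consequence of the already-proved Lemma~\ref{lemma:safadfd456}, and the only mild subtlety is making sure the boundary conventions $\os_0\equiv 0$ and $\os_{n+1}\equiv 1$ are invoked correctly so that the three-term formula for $g_k$ via $\delta_k^2$ is valid at the extreme indices $k=1$ and $k=n$, and that the $(n+1,n+1)$ diagonal entry of $M^{-1}$ is handled with its special value rather than $2(n+1)(n+2)$. The proof would simply state the reduction $a_k=\langle f,\sum_i(M^{-1})_{ki}\os_i\rangle$ and then exhibit these sums.
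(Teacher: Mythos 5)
Your proposal is correct and is precisely the computation the paper intends when it says the formulas follow "immediately" from $\mathbf{a}=M^{-1}\mathbf{b}$ and the explicit entries of $M^{-1}$ in Lemma~\ref{lemma:safadfd456}: you read off the rows of $M^{-1}$, handle the boundary conventions $\os_0\equiv 0$ and $\os_{n+1}\equiv 1$, and identify the resulting combinations with $g_k$ and with $(n+1)^2\cdot 1-(n+1)(n+2)\,\os_n$. Nothing is missing.
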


Now, to measure the global influence of the $k$th largest variable $x_{(k)}$ on an arbitrary function $f\in L^2(\I^n)$, we naturally define an index
$I\colon L^2(\I^n)\times [n]\to\R$ as $I(f,k)=a_k$, where $a_k$ is obtained from $f$ by (\ref{eq:bx3cv43}). We will see in the next section that
this index indeed measures an influence degree.

\begin{definition}\label{de:dfdgf78}
Let $I\colon L^2(\I^n)\times [n]\to\R$ be defined as $I(f,k)=\langle f,g_k\rangle$, that is
\begin{equation}\label{eq:dfdgf78}
I(f,k)=-(n+1)(n+2)\int_{\I^n}f(\bfx)\, \delta_k^2\, x_{(k)}\, d\bfx.
\end{equation}
\end{definition}

\begin{remark}
By combining (\ref{eq:sd87f9}) and (\ref{eq:dfdgf78}), we see that the index $I(f,k)$ can be easily computed when $f$ is any polynomial function
of order statistics.
\end{remark}

Thus we have defined an influence index from an elementary approximation (projection) problem. Conversely, the following result shows that the
best shifted $L$-statistic approximation of $f\in L^2(\I^n)$ is the unique function of $V_L$ that preserves the average value and the influence
index. To this extent, we observe that letting $i=n+1$ in (\ref{eq:1s23f1s}) leads to $\langle f_L,1\rangle=\langle f,1\rangle$, that is,
\begin{equation}\label{eq:6sa6df}
\frac{1}{n+1}\sum_{k=1}^{n+1}k\, a_k=\langle f,1\rangle.
\end{equation}

\begin{proposition}
A function $g\in V_L$ is the best shifted $L$-statistic approximation of $f\in L^2(\I^n)$ if and only if
$\int_{\I^n}f(\bfx)\,d\bfx=\int_{\I^n}g(\bfx)\,d\bfx$ and $I(f,k)=I(g,k)$ for all $k\in [n]$.
\end{proposition}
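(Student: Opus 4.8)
The plan is to prove both directions by exploiting the fact that $V_L$ has the basis $B=\{\os_1,\ldots,\os_n,\os_{n+1}\}$ together with the two conditions serving as $n+1$ linear constraints. The forward direction is immediate: if $g=f_L$, then $g\in V_L$ and the orthogonality conditions \eqref{eq:1s23f1s} with $i=n+1$ give $\langle f_L,1\rangle=\langle f,1\rangle$, i.e.\ the average values agree; and for each $k\in[n]$, since $g_k\in V_L$ (it is a linear combination of $\os_{k-1},\os_k,\os_{k+1}$, all of which lie in $B$ after accounting for $\os_0\equiv 0$ and $\os_{n+1}\equiv 1$), the orthogonality $\langle f-f_L,g_k\rangle=0$ yields $I(f,k)=\langle f,g_k\rangle=\langle f_L,g_k\rangle=I(f_L,k)=I(g,k)$.

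For the converse, the key observation is that the linear map $\Phi\colon V_L\to\R^{n+1}$ sending $g\mapsto\bigl(\langle g,1\rangle,\, I(g,1),\ldots,I(g,n)\bigr)$ is injective, hence bijective since $\dim V_L=n+1$. To see injectivity, suppose $g\in V_L$ with $\langle g,1\rangle=0$ and $I(g,k)=0$ for all $k\in[n]$. Write $g=\sum_{j=1}^{n+1}c_j\os_j$. Then $I(g,k)=\langle g,g_k\rangle=-(n+1)(n+2)\langle g,\delta_k^2\os_k\rangle$; by Lemma~\ref{lemma:safadfd456}, $-(n+1)(n+2)\langle\os_j,\delta_k^2\os_k\rangle$ is exactly the $(j,k)$ entry of $(n+1)(n+2)M^{-1}$ for $j,k$ in the appropriate ranges — more precisely, the conditions $I(g,k)=0$ for $k\in[n]$ together with $\langle g,1\rangle=0$ (the $i=n+1$ row, via \eqref{eq:6sa6df}) are equivalent to stating that $M\mathbf{c}$ has a prescribed form that forces $\mathbf{c}=\mathbf 0$. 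Cleaner still: applying \eqref{eq:bx3cv43} to $g$ itself, since $g\in V_L$ is its own best shifted $L$-statistic approximation, we get $c_k=I(g,k)$ for $k\in[n]$ and $c_{n+1}=(n+1)^2\langle g,1\rangle-(n+1)(n+2)\langle g,\os_n\rangle$; but $\langle g,\os_n\rangle$ is itself determined linearly by $\langle g,1\rangle$ and the $c_k$'s through the matrix $M$, so all coefficients vanish.

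Thus the map $\Phi$ is a bijection, and this immediately gives the converse: if $g\in V_L$ satisfies $\langle g,1\rangle=\langle f,1\rangle$ and $I(g,k)=I(f,k)$ for all $k\in[n]$, then $\Phi(g)=\Phi(f_L)$ (using the forward direction to evaluate $\Phi(f_L)$), whence $g=f_L$ by injectivity.

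The main obstacle I anticipate is making the injectivity argument fully rigorous without circularity: the slick route via \eqref{eq:bx3cv43} requires knowing that $c_{n+1}$ is recoverable, which in turn needs the relation expressing $\langle g,\os_n\rangle$ in terms of the $c_j$'s — this is just reading off row $n$ of $M\mathbf{c}=\mathbf b$ from Lemma~\ref{lemma:safadfd456}, but one must check it actually pins down $c_{n+1}$ once $c_1,\ldots,c_n$ and $\langle g,1\rangle$ are fixed. An alternative that sidesteps this is to argue directly at the level of matrices: the pair of conditions is $M^{-1}\mathbf b' = \mathbf a$ restricted appropriately, and since $M$ (hence $M^{-1}$) is invertible by Lemma~\ref{lemma:safadfd456}, the $n+1$ quantities $\langle g,1\rangle, I(g,1),\ldots,I(g,n)$ determine $\mathbf b'=(\langle g,\os_1\rangle,\ldots,\langle g,\os_{n+1}\rangle)$ and hence $g$ uniquely within $V_L$. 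I would present the matrix version as the backbone and mention the \eqref{eq:bx3cv43} shortcut as a remark.
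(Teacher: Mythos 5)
Your proof is correct and follows essentially the same route as the paper's: both reduce the claim to the invertibility of $M$ (equivalently, the equivalence between knowing $\langle g,\os_i\rangle$ for $i\in[n+1]$ and knowing the coefficients $a_1,\dots,a_{n+1}$) together with the identity (\ref{eq:6sa6df}), which trades the missing coefficient $a_{n+1}$ for the average value $\langle f,1\rangle$. The paper's version is just a more compact phrasing of your ``matrix backbone'' argument, formally extending $I(f,\cdot)$ to $n+1$ by $I(f,n+1)=a_{n+1}$.
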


\begin{proof}
We formally extend $I(f,\cdot)$ to $[n+1]$ by defining $I(f,n+1)=a_{n+1}$. By (\ref{eq:1s23f1s}), the function $g\in V_L$ is the best shifted
$L$-statistic approximation of $f\in L^2(\I^n)$ if and only if $\langle f,\os_i\rangle = \langle g,\os_i\rangle$ for all $i\in [n+1]$. By
(\ref{eq:45xycv}), this condition is equivalent to $I(f,k)=I(g,k)$ for all $k\in [n+1]$. We then conclude by (\ref{eq:6sa6df}).
\end{proof}

\begin{remark}
Combining (\ref{eq:a2s3d1as}) with (\ref{eq:6sa6df}), we can rewrite the best shifted $L$-statistic approximation of $f$ as $f_L=\langle
f,1\rangle + \sum_{k=1}^n I(f,k)\big(\textstyle{x_{(k)}-\frac{k}{n+1}}\big)$.
\end{remark}

\section{Properties and interpretations}

In this section we present various properties and interpretations of the index $I(f,k)$. The first result follows immediately from
Definition~\ref{de:dfdgf78}.

\begin{proposition}\label{prop:asd7fd}
For every $k\in [n]$, the mapping $f\mapsto I(f,k)$ is linear and continuous.
\end{proposition}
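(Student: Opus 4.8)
The plan is to read off both claims directly from Definition~\ref{de:dfdgf78}, which expresses $I(f,k)$ as the inner product $\langle f,g_k\rangle$ with the fixed function $g_k\in L^2(\I^n)$ defined in \eqref{eq:s7d9f}. Everything then reduces to elementary properties of the inner product on the Hilbert space $L^2(\I^n)$.

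First I would establish linearity. For fixed $k\in[n]$, and for $f_1,f_2\in L^2(\I^n)$ and scalars $\alpha,\beta\in\R$, bilinearity of $\langle\cdot,\cdot\rangle$ gives
\begin{equation*}
I(\alpha f_1+\beta f_2,k)=\langle\alpha f_1+\beta f_2,g_k\rangle=\alpha\langle f_1,g_k\rangle+\beta\langle f_2,g_k\rangle=\alpha\, I(f_1,k)+\beta\, I(f_2,k),
\end{equation*}
so $f\mapsto I(f,k)$ is linear.

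Next I would establish continuity. Since $\os_k\in L^2(\I^n)$ and the central second difference $\delta_k^2\os_k=\os_{k+1}-2\os_k+\os_{k-1}$ is a finite linear combination of bounded (hence square integrable) functions, the function $g_k=-(n+1)(n+2)\,\delta_k^2\os_k$ lies in $L^2(\I^n)$. By the Cauchy--Schwarz inequality,
\begin{equation*}
|I(f,k)|=|\langle f,g_k\rangle|\leqslant \|g_k\|\,\|f\|,
\end{equation*}
so the linear functional $f\mapsto I(f,k)$ is bounded, with operator norm at most $\|g_k\|$, and therefore continuous on $L^2(\I^n)$.

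There is no real obstacle here; the only point that deserves a word is the verification that $g_k\in L^2(\I^n)$, which is what makes $\langle f,g_k\rangle$ a well-defined continuous linear functional in the first place, and this follows from the boundedness of the order statistic functions on $\I^n$. One could alternatively invoke the fact that $I(f,k)=a_k$ is the $k$th coordinate of the orthogonal projection $f_L$ of $f$ onto the finite-dimensional subspace $V_L$ expressed in the basis \eqref{eq:Base21331}; orthogonal projection is linear and continuous, and taking a fixed coordinate in a fixed basis of a finite-dimensional space is linear and continuous, so the composition is as well. But the direct argument via \eqref{eq:dfdgf78} and Cauchy--Schwarz is shorter and self-contained.
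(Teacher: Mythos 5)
Your proof is correct and is exactly the argument the paper has in mind: the paper states that the proposition ``follows immediately from Definition~\ref{de:dfdgf78}'', i.e.\ from $I(f,k)=\langle f,g_k\rangle$ with $g_k$ a fixed element of $L^2(\I^n)$, which is precisely your linearity-plus-Cauchy--Schwarz argument. Nothing further is needed.
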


We now present an interpretation of $I(f,k)$ as a covariance. Considering the unit cube $\I^n$ as a probability space with respect to the
Lebesgue measure, we see that, for any $k\in [n]$, the index $I(f,k)$ is the covariance of the random variables $f$ and $g_k$. Indeed, we have
$I(f,k)=E(f\, g_k)=\mathrm{cov}(f,g_k)+E(f)\, E(g_k)$, where $E(g_k)=\langle 1,g_k\rangle = I(1,k)=0$. From the usual interpretation of the
concept of covariance, we see that $I(f,k)$ is positive whenever the values of $f-E(f)$ and $g_k-E(g_k)=g_k$ have the same sign. Note that $g_k(\bfx)$ is
positive whenever $x_{(k)}$ is greater than $\frac 12(x_{(k+1)}+x_{(k-1)})$, which is the midpoint of the range of $x_{(k)}$ when the other
order statistics are fixed at $\bfx$.

We now provide an interpretation of $I(f,k)$ as an expected value of the derivative of $f$ in the direction of the $k$th largest variable (see
Proposition~\ref{prop:sdf79}).

Let $S_n$ denote the symmetric group on $[n]$. Recall that the
unit cube $\I^n$ can be partitioned almost everywhere into the open standard simplexes
$$
\I^n_{\pi}=\{\bfx\in\I^n : x_{\pi(1)}<\cdots < x_{\pi(n)}\} \qquad (\pi\in S_n).
$$

\begin{definition}
Given $k\in [n]$, let $f\colon\cup_{\pi\in S_n}\I^n_{\pi}\to\R$ be a function such that the partial derivative $D_{\pi(k)}f|_{\I^n_{\pi}}$
exists for every $\pi\in S_n$. The \emph{derivative of $f$ in the direction $(k)$} is the function $D_{(k)}f\colon\cup_{\pi\in
S_n}\I^n_{\pi}\to\R$ defined as
$$
D_{(k)}f(\bfx)=D_{\pi(k)}f(\bfx)\quad\mbox{for all}\quad\bfx\in\I^n_{\pi}.
$$
\end{definition}

\begin{remark}
By considering the chain rule in $\cup_{\pi\in S_n}\I^n_{\pi}$ with the usual assumptions, we immediately obtain the formula
$$
D_{(k)}f\big(g_1(\bfx),\ldots,g_n(\bfx)\big)=\sum_{i=1}^n (D_if)\big(g_1(\bfx),\ldots,g_n(\bfx)\big)\, D_{(k)} g_i(\bfx).
$$
\end{remark}



Now, for every $k\in [n]$, consider the function $h_k\in L^2(\I^n)$ defined as
$$
h_k=(n+1)(n+2)(\os_{k+1}-\os_k)(\os_k-\os_{k-1}).
$$
It is immediate to see that $h_k$ is nonnegative and continuous and that $D_{(k)}h_k=-g_k$, where $g_k$ is defined in (\ref{eq:s7d9f}).
Moreover, using (\ref{eq:sd87f9}) or (\ref{eq:w9rer89}), we easily see that $h_k$ is a probability density function on $\I^n$. This fact can
also be derived by choosing $f=\os_k$ in the following result.

\begin{proposition}\label{prop:sdf79}
For every $k\in [n]$ and every $f\in L^2(\I^n)$ such that $D_{(k)}f$ is continuous and integrable on $\cup_{\pi\in S_n}\I^n_{\pi}$, we have
\begin{equation}\label{eq:sdf79}
I(f,k)=\int_{\I^n}h_k(\bfx)\, D_{(k)}f(\bfx)\, d\bfx.
\end{equation}
\end{proposition}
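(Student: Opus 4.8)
The plan is to prove the identity \eqref{eq:sdf79} by integration by parts, carried out separately on each standard simplex $\I^n_\pi$ and then summed over $\pi\in S_n$. Starting from Definition~\ref{de:dfdgf78}, we have $I(f,k)=\langle f,g_k\rangle$, and since $\os_k$ and its relevant combinations are smooth on the interior of each $\I^n_\pi$, we may write $g_k=-D_{(k)}h_k$ there. Thus on each simplex the integrand is $-f\cdot D_{(k)}h_k$, and the natural move is to integrate by parts in the coordinate $x_{\pi(k)}$, transferring the derivative from $h_k$ onto $f$, which produces the term $\int h_k\, D_{(k)}f$ plus boundary contributions.

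The key steps, in order, are as follows. First, fix $\pi\in S_n$ and work in $\I^n_\pi$; after relabeling variables one integrates in the single variable playing the role of $x_{(k)}$, which ranges between $x_{(k-1)}$ and $x_{(k+1)}$ (with the conventions $\os_0\equiv 0$, $\os_{n+1}\equiv 1$ supplying the endpoints when $k=1$ or $k=n$). Second, apply one-dimensional integration by parts in that variable: $\int (-f)\, D_{(k)}h_k = [-f\, h_k]_{\text{endpoints}} + \int h_k\, D_{(k)}f$. Third, observe that $h_k=(n+1)(n+2)(\os_{k+1}-\os_k)(\os_k-\os_{k-1})$ vanishes precisely when $x_{(k)}=x_{(k-1)}$ or $x_{(k)}=x_{(k+1)}$, i.e.\ exactly at the two endpoints of the range of integration; hence the boundary term vanishes identically. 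Fourth, sum over all $\pi\in S_n$ and recombine using the fact that $\cup_\pi \I^n_\pi$ covers $\I^n$ up to a null set, obtaining $I(f,k)=\int_{\I^n} h_k(\bfx)\, D_{(k)}f(\bfx)\, d\bfx$. The continuity and integrability hypotheses on $D_{(k)}f$ guarantee that Fubini applies and that the integrals are well defined, so the slicing into the $x_{\pi(k)}$-direction is legitimate.

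The main obstacle is making the integration by parts rigorous near the boundaries of the simplexes, where $f$ itself need not be differentiable or even continuous and where the simplexes meet. The clean resolution is that we never need $f$ to be well behaved at those boundaries: the weight $h_k$ vanishes there, so the boundary term is killed regardless of the behaviour of $f$, and the interior integral is finite by the integrability assumption on $D_{(k)}f$ together with the boundedness and continuity of $h_k$. One should also check that $D_{(k)}h_k=-g_k$ holds on the interior of each simplex (a direct computation: on $\I^n_\pi$ the order statistics $\os_{k-1},\os_k,\os_{k+1}$ are just three of the coordinates, and differentiating $(\os_{k+1}-\os_k)(\os_k-\os_{k-1})$ with respect to the middle one gives $\os_{k+1}-2\os_k+\os_{k-1}=\delta_k^2\,\os_k$, up to the constant), which the excerpt already asserts.

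A slightly slicker alternative, avoiding explicit slicing, is to invoke a divergence-theorem argument: on each $\I^n_\pi$ write $f\, g_k = -f\, D_{(k)}h_k = -D_{(k)}(f h_k) + h_k D_{(k)} f$ and integrate; the first term is an integral of a pure partial derivative over the simplex, which by the divergence theorem reduces to a flux through $\partial \I^n_\pi$ of something proportional to $h_k$, again zero because $h_k$ vanishes on the relevant faces. Either route reaches the same conclusion, and I would present the one-dimensional slicing version since it requires the least machinery and matches the level of the surrounding material.
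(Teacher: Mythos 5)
Your proposal is correct and follows essentially the same route as the paper: the paper writes $h_k\, D_{(k)}f = D_{(k)}(h_k f) + g_k f$ via the product rule and then shows $\int_{\I^n} D_{(k)}(h_k f)\, d\bfx = 0$ by permuting the iterated integrals so as to integrate first in $x_{\pi(k)}$ over $[x_{\pi(k-1)}, x_{\pi(k+1)}]$, where the inner integral vanishes because $h_k$ vanishes at both endpoints --- which is exactly your one-dimensional integration by parts with vanishing boundary terms. The key observations (that $D_{(k)}h_k = -g_k$ on each open simplex, that $h_k$ vanishes where $x_{(k)}$ meets $x_{(k\pm 1)}$, and the Fubini slicing in the $x_{\pi(k)}$-direction) all match the paper's argument.
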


\begin{proof}
Fix $k\in [n]$. Using the product rule, we obtain
$$
h_k(\bfx)\, D_{(k)}f(\bfx)=D_{(k)}\big(h_k(\bfx)\, f(\bfx)\big)+g_k(\bfx)\, f(\bfx),
$$
and hence we only need to show that
\begin{equation}\label{eq:9fs80d}
\int_{\I^n}D_{(k)}\big(h_k(\bfx)\, f(\bfx)\big)\, d\bfx=0.
\end{equation}
But the left-hand side of (\ref{eq:9fs80d}) can be rewritten as
\begin{multline*}
\sum_{\pi\in S_n}\int_{\I^n_{\pi}}D_{\pi(k)}\big(h_k(\bfx)\, f(\bfx)\big)\, d\bfx\\
=\sum_{\pi\in S_n}\int_0^1\int_0^{x_{\pi(n)}}\cdots\int_0^{x_{\pi(3)}}\int_0^{x_{\pi(2)}}D_{\pi(k)}\big(h_k(\bfx)\, f(\bfx)\big)\,
dx_{\pi(1)}dx_{\pi(2)}\cdots dx_{\pi(n)}\, ,
\end{multline*}
that is, if we permute the integrals so that we integrate first with respect to $x_{\pi(k)}$,
$$
\sum_{\pi\in S_n}\int_0^1\cdots\int_{x_{\pi(k-1)}}^{x_{\pi(k+1)}}D_{\pi(k)}\big(h_k(\bfx)\, f(\bfx)\big)\, dx_{\pi(k)}\cdots dx_{\pi(1)}
$$
which is zero since so is the inner integral.
\end{proof}

\begin{remark}
Under the assumptions of Proposition~\ref{prop:sdf79}, if $D_{(k)}f=0$ (resp.\ $\geqslant 0$, $\leqslant 0$) almost everywhere, then $I(f,k)=0$ (resp.\ $\geqslant 0$, $\leqslant 0$).
\end{remark}

We now give an alternative interpretation of $I(f,k)$ as an expected value, which does not require the additional assumptions of
Proposition~\ref{prop:sdf79}. In this more general framework, we naturally replace the derivative with a difference quotient. To this extent, we
introduce some further notation. As usual, we denote by $\mathbf{e}_i$ the $i$th vector of the standard basis for $\R^n$. For every $k\in [n]$
and every $h\in [0,1]$, we define the \emph{$(k)$-difference} (or \emph{discrete $(k)$-derivative}) operator $\Delta_{(k),h}$ over the set of
real functions on $\I^n$ by
$$
\Delta_{(k),h}f(\bfx)=f(\bfx+h\,\mathbf{e}_{\pi(k)})-f(\bfx)
$$
for every $\bfx\in\I^n_{\pi}$ such that $\bfx+h\mathbf{e}_{\pi(k)}\in\I^n_{\pi}$. Thus defined, the value $\Delta_{(k),h}f(\bfx)$ can be
interpreted as the \emph{marginal contribution} of $x_{(k)}$ on $f$ at $\bfx$ with respect to the increase $h$. For instance, we have
$\Delta_{(k),h}\, x_{(k)}=h$.

Similarly, we define the \emph{$(k)$-difference quotient} operator $Q_{(k),h}$ over the set of real functions on $\I^n$ by
$Q_{(k),h}f(\bfx)=\frac{1}{h}\Delta_{(k),h}f(\bfx)$.

\begin{theorem}\label{thm:s7a897}
For every $k\in [n]$ and every $f\in L^2(\I^n)$, we have
\begin{equation}\label{eq:asd7fd}
I(f,k)=(n+1)(n+2)\int_{\I^n}\int_{x_{(k)}}^{x_{(k+1)}}\Delta_{(k),y-x_{(k)}}f(\bfx)\, dy\, d\bfx.
\end{equation}
\end{theorem}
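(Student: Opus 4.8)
The plan is to start from the defining formula \eqref{eq:dfdgf78}, namely $I(f,k)=-(n+1)(n+2)\int_{\I^n}f(\bfx)\,\delta_k^2\,x_{(k)}\,d\bfx$, and rewrite the kernel $-\delta_k^2\,x_{(k)}=-(x_{(k+1)}-2x_{(k)}+x_{(k-1)})$ as a double integral whose inner variable $y$ ranges over $[x_{(k)},x_{(k+1)}]$, matching the right-hand side of \eqref{eq:asd7fd}. The cleanest route is to split the unit cube into the simplexes $\I^n_\pi$ and, on each piece, perform the $y$-integral explicitly: since $\Delta_{(k),y-x_{(k)}}f(\bfx)=f(\bfx+(y-x_{(k)})\bfe_{\pi(k)})-f(\bfx)$, integrating over $y\in[x_{(k)},x_{(k+1)}]$ gives $\int_{x_{(k)}}^{x_{(k+1)}}f(\bfx+(y-x_{(k)})\bfe_{\pi(k)})\,dy-(x_{(k+1)}-x_{(k)})f(\bfx)$. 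So the right-hand side of \eqref{eq:asd7fd} equals $(n+1)(n+2)\sum_\pi\int_{\I^n_\pi}\big[\int_{x_{(k)}}^{x_{(k+1)}}f(\bfx+(y-x_{(k)})\bfe_{\pi(k)})\,dy-(x_{(k+1)}-x_{(k)})f(\bfx)\big]d\bfx$.

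Next I would treat the two resulting terms separately. For the second term, $-(n+1)(n+2)\int_{\I^n}(x_{(k+1)}-x_{(k)})f(\bfx)\,d\bfx$, I want to show it accounts for the $x_{(k+1)}-x_{(k)}$ part of the kernel. For the first term, the key move is a change of variables on each simplex $\I^n_\pi$: fix all coordinates except $x_{\pi(k)}$, and replace the pair $(x_{\pi(k)}, y)$ — where $x_{\pi(k)}$ ranges over $(x_{\pi(k-1)},x_{\pi(k+1)})$ and $y$ over $(x_{\pi(k)},x_{\pi(k+1)})$ — by $(t,u)$ where $t=x_{\pi(k)}$ and $u=$ the new position $x_{\pi(k)}+(y-x_{\pi(k)})$ of the shifted point, i.e.\ $u=y$. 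The point $\bfx+(y-x_{(k)})\bfe_{\pi(k)}$ has $\pi(k)$-coordinate $y\in(t,x_{\pi(k+1)})$, so after relabeling we are integrating $f$ evaluated at a point whose $\pi(k)$-coordinate is $u$, with $(t,u)$ ranging over the triangle $x_{\pi(k-1)}<t<u<x_{\pi(k+1)}$. Swapping the order of the $t$- and $u$-integrals and integrating out $t$ (which no longer appears in $f$) produces a factor $u-x_{\pi(k-1)}$, leaving $\int_{x_{\pi(k-1)}}^{x_{\pi(k+1)}}(u-x_{\pi(k-1)})f(\ldots u\ldots)\,du$; renaming $u\mapsto x_{\pi(k)}$ this is exactly $\int$ over the original simplex of $(x_{(k)}-x_{(k-1)})f(\bfx)\,d\bfx$ — wait, I must be careful: the combination that emerges should be $\int_{\I^n_\pi}(2x_{(k)}-x_{(k-1)}-\text{something})$; the bookkeeping here is where the two terms recombine into $-\delta_k^2 x_{(k)}=2x_{(k)}-x_{(k+1)}-x_{(k-1)}$.

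Carrying this out, summing over $\pi$ to restore the integral over all of $\I^n$, and comparing with \eqref{eq:dfdgf78} finishes the proof. The main obstacle I anticipate is the Fubini/relabeling step in the middle paragraph: one must set up the simplex $\I^n_\pi$ integral with the $y$-variable adjoined, justify the interchange of the $x_{\pi(k)}$- and $y$-integrations over the triangular region $x_{\pi(k-1)}<x_{\pi(k)}<y<x_{\pi(k+1)}$, and correctly track which coordinate of the argument of $f$ is the "live" one after the shift — it is crucial that $\bfx+(y-x_{(k)})\bfe_{\pi(k)}$ still lies in $\I^n_\pi$ (guaranteed since $y<x_{\pi(k+1)}$) so that $f$ is being integrated against the same simplex throughout. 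An alternative that sidesteps the change of variables: verify \eqref{eq:asd7fd} first for $f=\os_j$, $j\in[n+1]$ — where both sides are computable via \eqref{eq:sd87f9} and \eqref{eq:w9rer89} — and then invoke linearity and $L^2$-continuity of both sides (Proposition~\ref{prop:asd7fd} and the fact that the right-hand side is visibly a bounded linear functional of $f$) together with the density of polynomials; but since $V_L$ is only $(n+1)$-dimensional this shortcut does not obviously extend to all of $L^2(\I^n)$ without the $L^2$-continuity argument on the right-hand side, so the direct computation is the safer plan.
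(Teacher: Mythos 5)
Your proposal is correct and follows essentially the same route as the paper's proof: split $\I^n$ into the simplexes $\I^n_\pi$, observe that the $-f(\bfx)$ part of the difference contributes $-(x_{(k+1)}-x_{(k)})f(\bfx)$, and handle the shifted term by Fubini on the triangle $x_{\pi(k-1)}<x_{\pi(k)}<y<x_{\pi(k+1)}$, integrating out the now-inert coordinate to produce the factor $y-x_{\pi(k-1)}$ and renaming $y$ as $x_{\pi(k)}$ to get $(x_{(k)}-x_{(k-1)})f(\bfx)$, so that the two pieces recombine into $-\delta_k^2\,x_{(k)}$. Your momentary hesitation about the bookkeeping resolves correctly, and your remark that $\bfx+(y-x_{(k)})\bfe_{\pi(k)}$ stays in $\I^n_\pi$ because $y<x_{\pi(k+1)}$ is exactly the point that makes the relabeling legitimate.
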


\begin{proof}
The right-hand side of (\ref{eq:asd7fd}) can be rewritten as
\begin{equation}\label{eq:asd7fd1}
(n+1)(n+2)\sum_{\pi\in S_n}\int_{\I^n_{\pi}}\int_{x_{\pi(k)}}^{x_{\pi(k+1)}}\big(f(\bfx+(y-x_{\pi(k)})\,\mathbf{e}_{\pi(k)})-f(\bfx)\big)\, dy\,
d\bfx.
\end{equation}
On the one hand, we have
\begin{equation}\label{eq:asd7fd2}
\int_{\I^n_{\pi}}\int_{x_{\pi(k)}}^{x_{\pi(k+1)}}f(\bfx)\, dy\, d\bfx=\int_{\I^n_{\pi}}(x_{\pi(k+1)}-x_{\pi(k)})f(\bfx)\, d\bfx.
\end{equation}
On the other hand, by permuting the integrals exactly as in the proof of Proposition~\ref{prop:sdf79}, we obtain
\begin{multline*}
\int_{\I^n_{\pi}}\int_{x_{\pi(k)}}^{x_{\pi(k+1)}}f(\bfx+(y-x_{\pi(k)})\,\mathbf{e}_{\pi(k)})\, dy\, d\bfx\\
= \int_0^1\cdots\int_{x_{\pi(k-1)}}^{x_{\pi(k+1)}}\int_{x_{\pi(k)}}^{x_{\pi(k+1)}}f(\bfx+(y-x_{\pi(k)})\,\mathbf{e}_{\pi(k)})\, dy\,
dx_{\pi(k)}\cdots dx_{\pi(1)}
\end{multline*}
which, by permuting the two inner integrals, becomes
\begin{multline*}
\int_0^1\cdots\int_{x_{\pi(k-1)}}^{x_{\pi(k+1)}}\int_{x_{\pi(k-1)}}^{y}f(\bfx+(y-x_{\pi(k)})\,\mathbf{e}_{\pi(k)})\,
dx_{\pi(k)}\, dy\cdots dx_{\pi(1)}\\
= \int_0^1\cdots\int_{x_{\pi(k-1)}}^{x_{\pi(k+1)}}(y-x_{\pi(k-1)})f(\bfx+(y-x_{\pi(k)})\,\mathbf{e}_{\pi(k)})\, dy\cdots dx_{\pi(1)}.
\end{multline*}
By renaming $y$ as $x_{\pi(k)}$, we finally obtain
\begin{equation}\label{eq:asd7fd3}
\int_{\I^n_{\pi}}\int_{x_{\pi(k)}}^{x_{\pi(k+1)}}f(\bfx+(y-x_{\pi(k)})\,\mathbf{e}_{\pi(k)})\, dy\, d\bfx=
\int_{\I^n_{\pi}}(x_{\pi(k)}-x_{\pi(k-1)})f(\bfx)\, d\bfx.
\end{equation}
By substituting (\ref{eq:asd7fd2}) and (\ref{eq:asd7fd3}) in (\ref{eq:asd7fd1}), we finally obtain $I(f,k)$.
\end{proof}

As an immediate consequence of Theorem~\ref{thm:s7a897}, we have the following interpretation of the index $I(f,k)$ as an expected value of a
difference quotient with respect to some distribution.

\begin{corollary}
For every $k\in [n]$ and every $f\in L^2(\I^n)$, we have
$$
I(f,k)=\int_{\I^n}\int_{x_{(k)}}^{x_{(k+1)}}p_k(\bfx,y)\, Q_{(k),y-x_{(k)}}f(\bfx)\, dy\, d\bfx,
$$
where $p_k(\bfx,y)=(n+1)(n+2)(y-x_{(k)})$ defines a probability density function on the set $\{(\bfx,y) : \bfx\in\I^n,\, y\in
[x_{(k)},x_{(k+1)}]\}$.
\end{corollary}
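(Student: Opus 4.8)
The plan is to deduce this corollary directly from Theorem~\ref{thm:s7a897} by rewriting the integrand in terms of the difference quotient operator $Q_{(k),h}$ and verifying that the stated kernel is a probability density. First I would recall that, by definition, $\Delta_{(k),h}f(\bfx)=h\,Q_{(k),h}f(\bfx)$ whenever $\bfx$ and $\bfx+h\,\mathbf{e}_{\pi(k)}$ lie in the same simplex $\I^n_\pi$. Applying this with $h=y-x_{(k)}$, the integrand $\Delta_{(k),y-x_{(k)}}f(\bfx)$ appearing in \eqref{eq:asd7fd} becomes $(y-x_{(k)})\,Q_{(k),y-x_{(k)}}f(\bfx)$, so that
\begin{equation*}
I(f,k)=(n+1)(n+2)\int_{\I^n}\int_{x_{(k)}}^{x_{(k+1)}}(y-x_{(k)})\,Q_{(k),y-x_{(k)}}f(\bfx)\, dy\, d\bfx,
\end{equation*}
which is exactly the asserted formula with $p_k(\bfx,y)=(n+1)(n+2)(y-x_{(k)})$.

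Next I would check that $p_k$ is indeed a probability density on the set $\{(\bfx,y):\bfx\in\I^n,\ y\in[x_{(k)},x_{(k+1)}]\}$. Nonnegativity is clear since $y\geqslant x_{(k)}$ on this set. For the normalization, I would integrate $p_k$ over the set: the inner integral $\int_{x_{(k)}}^{x_{(k+1)}}(y-x_{(k)})\, dy$ equals $\tfrac12(x_{(k+1)}-x_{(k)})^2$, and then I would evaluate $\int_{\I^n}\tfrac12(\os_{k+1}-\os_k)^2\, d\bfx$. This can be done using \eqref{eq:sd87f9} (with the $m=2$ case, exponents $2$ on positions $k$ and $k+1$, after expanding the square into $\os_{k+1}^2-2\os_k\os_{k+1}+\os_k^2$), or more quickly by noticing that $\tfrac12(\os_{k+1}-\os_k)^2$ is, up to the factor $(n+1)(n+2)$, essentially of the same shape as $h_k$; in fact an even faster route is to observe that taking $f\equiv 1$ in the displayed formula for $I(f,k)$ forces $\int_{\I^n}\int_{x_{(k)}}^{x_{(k+1)}}p_k(\bfx,y)\, dy\, d\bfx=I(1,k)/\!\big((n+1)(n+2)\big)\cdot(n+1)(n+2)$; but $Q_{(k),h}1=0$, so that argument is vacuous, and I would instead rely on the direct computation, which gives $\int_{\I^n}(\os_{k+1}-\os_k)^2\, d\bfx=\tfrac{2}{(n+1)(n+2)}$, whence the total integral of $p_k$ is $1$.

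The only subtlety worth a sentence is domain-of-definition bookkeeping: $Q_{(k),y-x_{(k)}}f(\bfx)$ is defined precisely when $\bfx+(y-x_{(k)})\mathbf{e}_{\pi(k)}$ stays in $\I^n_\pi$, i.e.\ when $y<x_{(k+1)}$, which holds for almost every $(\bfx,y)$ in the integration region, so the rewriting is valid a.e.\ and the integrals agree. I expect no real obstacle here; the main (entirely routine) step is the moment computation $\int_{\I^n}(\os_{k+1}-\os_k)^2\, d\bfx=\tfrac{2}{(n+1)(n+2)}$ via \eqref{eq:sd87f9}, after which the corollary is immediate.
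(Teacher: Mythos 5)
Your argument is correct and is exactly the deduction the paper intends (the corollary is stated as an immediate consequence of Theorem~\ref{thm:s7a897}, with no separate proof given): substitute $\Delta_{(k),h}f=h\,Q_{(k),h}f$ with $h=y-x_{(k)}$ and verify the normalization of $p_k$. Your moment computation checks out, since by Lemma~\ref{lemma:safadfd456} one has $\int_{\I^n}(\os_{k+1}-\os_k)^2\,d\bfx=\frac{(k+1)(k+2)-2k(k+2)+k(k+1)}{(n+1)(n+2)}=\frac{2}{(n+1)(n+2)}$.
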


Another important feature of the index is its invariance under the action of permutations. Recall that a permutation $\pi\in S_n$ acts on a
function $f\colon\I^n\to\R$ by $\pi(f)(x_1,\ldots,x_n)=f(x_{\pi(1)},\ldots,x_{\pi(n)})$. By the change of variables theorem, we immediately see
that every $\pi\in S_n$ is an isometry of $L^2(\I^n)$, that is, $\langle \pi(f),\pi(g)\rangle=\langle f,g\rangle$. From this fact, we derive the
following result.

\begin{proposition}\label{prop:sd9f7}
For every $f\in L^2(\I^n)$ and every $\pi\in S_n$, both functions $f$ and $\pi(f)$ have the same best shifted $L$-statistic approximation $f_L$.
Moreover, we have $\|\pi(f)-f_L\|=\|f-f_L\|$.
\end{proposition}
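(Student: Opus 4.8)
The plan is to reduce everything to the defining property of the best shifted $L$-statistic approximation, namely the orthogonality conditions (\ref{eq:1s23f1s}), and then exploit the fact that each $\pi\in S_n$ is an isometry of $L^2(\I^n)$ that fixes every element of the basis $B$. First I would recall that $\os_i$ is a symmetric function, so $\pi(\os_i)=\os_i$ for every $i\in[n+1]$ and every $\pi\in S_n$; in particular $\pi$ maps the subspace $V_L$ onto itself, fixing it pointwise. Then, using $\langle\pi(f),\pi(g)\rangle=\langle f,g\rangle$, for each $i\in[n+1]$ I would write
\begin{equation*}
\langle \pi(f),\os_i\rangle=\langle \pi(f),\pi(\os_i)\rangle=\langle f,\os_i\rangle,
\end{equation*}
so $\pi(f)$ and $f$ have exactly the same inner products against the basis $B$. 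Since the coefficient vector of the best approximation is $\mathbf{a}=M^{-1}\mathbf{b}$ with $(\mathbf{b})_i=\langle f,\os_i\rangle$ by (\ref{eq:45xycv}), and this column $\mathbf{b}$ is unchanged when $f$ is replaced by $\pi(f)$, it follows that $(\pi(f))_L=f_L$.

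For the second assertion I would again invoke that $\pi$ is an isometry together with $\pi(f_L)=f_L$ (which holds because $f_L\in V_L$ and $\pi$ fixes $V_L$ pointwise), whence
\begin{equation*}
\|\pi(f)-f_L\|=\|\pi(f)-\pi(f_L)\|=\|\pi(f-f_L)\|=\|f-f_L\|.
\end{equation*}
That is the whole argument. There is no real obstacle here: the only point that must be stated carefully is the symmetry of the order statistic functions $\os_i$ (including the conventions $\os_0\equiv 0$, $\os_{n+1}\equiv 1$), since everything hinges on $\pi$ fixing the basis $B$; once that is granted, both claims are one-line consequences of the isometry property and the characterization (\ref{eq:1s23f1s})--(\ref{eq:45xycv}) of $f_L$. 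One could alternatively phrase the first part via the index: since $I(\pi(f),k)=\langle\pi(f),g_k\rangle=\langle\pi(f),\pi(g_k)\rangle=\langle f,g_k\rangle=I(f,k)$ (using that $g_k=-(n+1)(n+2)\,\delta_k^2\os_k$ is symmetric) and likewise the average values agree, the preceding proposition forces $(\pi(f))_L=f_L$; but the direct computation with $\mathbf{b}$ is the cleanest route.
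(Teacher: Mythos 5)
Your argument is correct and is essentially the paper's own proof: both rest on the isometry property $\langle\pi(f),\pi(g)\rangle=\langle f,g\rangle$ together with the symmetry of the elements of $V_L$, applied to the orthogonality characterization (\ref{eq:1s23f1s}) for the first claim and to the norm computation for the second. Your phrasing via the unchanged column $\mathbf{b}$ in $\mathbf{a}=M^{-1}\mathbf{b}$ is just a basis-level restatement of the paper's test against an arbitrary symmetric $g\in V_L$.
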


\begin{proof}
Let $f\in L^2(\I^n)$, $g\in V_L$, and $\pi\in S_n$. Since $\pi$ is an isometry of $L^2(\I^n)$ and $g$ is symmetric, by (\ref{eq:1s23f1s}) we
have $\langle \pi(f),g\rangle=\langle f,g\rangle=\langle f_L,g\rangle$, which shows that $\pi(f)_L=f_L$. Using similar arguments, we obtain
\begin{eqnarray*}
\|\pi(f)-f_L\|^2 &=& \langle\pi(f)-f_L,\pi(f)-f_L\rangle=\langle f-f_L,f-f_L\rangle\\
&=& \|f-f_L\|^2,
\end{eqnarray*}
which completes the proof.
\end{proof}

With any function $f\colon\I^n\to\R$ we can associate the following symmetric function
$$
\mathrm{Sym}(f)=\frac{1}{n!}\sum_{\pi\in S_n}\pi(f).
$$
It follows immediately from Propositions~\ref{prop:asd7fd} and \ref{prop:sd9f7} that both functions $f$ and $\mathrm{Sym}(f)$ have the same best
shifted $L$-statistic approximation $f_L$. Combining this observation with Proposition~\ref{prop:sd9f7}, we derive immediately the following
corollary.

\begin{corollary}\label{cor:f7as8}
For every $k\in [n]$, every $f\in L^2(\I^n)$, and every $\pi\in S_n$, we have $I(f,k)=I(\pi(f),k)=I(\mathrm{Sym}(f),k)$.
\end{corollary}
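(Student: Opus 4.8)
The plan is to read off the conclusion from the structural facts already established, namely that $f$, $\pi(f)$, and $\mathrm{Sym}(f)$ all share the same best shifted $L$-statistic approximation $f_L$. By \eqref{eq:a2s3d1as} the function $f_L$ is expanded in the linearly independent basis \eqref{eq:Base21331}, so its coefficients $a_1,\ldots,a_{n+1}$ are uniquely determined by $f_L$ itself. Since $I(f,k)=a_k$ for $k\in[n]$, two functions with the same best shifted $L$-statistic approximation necessarily have the same index. Thus $I(f,k)=I(\pi(f),k)$ follows from Proposition~\ref{prop:sd9f7}, and $I(f,k)=I(\mathrm{Sym}(f),k)$ follows from the observation recorded just above the statement (obtained from Propositions~\ref{prop:asd7fd} and \ref{prop:sd9f7}) that $f$ and $\mathrm{Sym}(f)$ have the same $f_L$.

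It is worth also giving the short self-contained argument straight from Definition~\ref{de:dfdgf78}. From \eqref{eq:s7d9f} we have $g_k=-(n+1)(n+2)(\os_{k+1}-2\os_k+\os_{k-1})$, which, using the conventions $\os_0\equiv 0$ and $\os_{n+1}\equiv 1$, is a shifted $L$-statistic; in particular $g_k$ is a symmetric function of $x_1,\ldots,x_n$, so $\pi(g_k)=g_k$ for every $\pi\in S_n$. Since each $\pi$ is an isometry of $L^2(\I^n)$, we get
$$
I(\pi(f),k)=\langle\pi(f),g_k\rangle=\langle\pi(f),\pi(g_k)\rangle=\langle f,g_k\rangle=I(f,k).
$$
Averaging this identity over $\pi\in S_n$ and using the linearity of $f\mapsto I(f,k)$ from Proposition~\ref{prop:asd7fd} yields
$$
I(\mathrm{Sym}(f),k)=\frac{1}{n!}\sum_{\pi\in S_n}I(\pi(f),k)=I(f,k).
$$

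There is no genuine obstacle here: the entire content is carried by the isometry invariance in Proposition~\ref{prop:sd9f7} (equivalently, by the symmetry of $g_k$) together with the linearity of the index. The only point requiring a line of verification is the bookkeeping remark that $g_k$ indeed belongs to $V_L$ — that is, that $\delta_k^2\,\os_k$ is again a shifted $L$-statistic — which is immediate from the definitions of $\os_0$ and $\os_{n+1}$.
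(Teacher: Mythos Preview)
Your proposal is correct and matches the paper's own derivation: the corollary is stated there as an immediate consequence of Proposition~\ref{prop:sd9f7} together with the observation (from Propositions~\ref{prop:asd7fd} and~\ref{prop:sd9f7}) that $f$ and $\mathrm{Sym}(f)$ share the same $f_L$, which is exactly your first argument. Your second, self-contained argument via $\langle\pi(f),g_k\rangle=\langle f,g_k\rangle$ is simply the isometry-plus-symmetry computation already used inside the proof of Proposition~\ref{prop:sd9f7}, specialized to $g=g_k$, so it is a minor repackaging rather than a different route.
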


\begin{remark}\label{rem:as98d7f}
Corollary~\ref{cor:f7as8} shows that, to compute $I(f,k)$, we can replace $f$ with $\mathrm{Sym}(f)$. For instance, if $f(\bfx)=x_i$ for some
$i\in [n]$ then $\mathrm{Sym}(f)=\frac{1}{n}\sum_{i=1}^nx_i=\frac{1}{n}\sum_{i=1}^nx_{(i)}$ and hence, using Proposition~\ref{prop:sdf79}, we
obtain $I(f,k)=\frac 1n$.
\end{remark}

We say that two functions $f\colon\I^n\to\R$ and $g\colon\I^n\to\R$ are \emph{symmetrically equivalent} (and we write $f\sim g$) if
$\mathrm{Sym}(f)=\mathrm{Sym}(g)$. By Corollary~\ref{cor:f7as8}, for any $f,g\in L^2(\I^n)$ such that $f\sim g$, we have $I(f,k)=I(g,k)$.

We end this section by analyzing the behavior of the influence index $I(f,k)$ on some special classes of functions.

Given $k\in [n]$, we say that the order statistic $x_{(k)}$ is \emph{ineffective} almost everywhere for a function $f\colon\I^n\to\R$ if
$\Delta_{(k),y-x_{(k)}}f(\bfx)=0$ for almost all $\bfx\in\cup_{\pi\in S_n}\I^n_{\pi}$ and almost all $y\in\left]x_{(k-1)},x_{(k+1)}\right[$. For instance,
given unary functions $f_1,f_2\in L^2(\I)$, the order statistic $x_{(1)}$ is ineffective almost everywhere for the function $f\colon\I^2\to\R$ such that
$$
f(x_1,x_2)=
\begin{cases}
f_1(x_1), & \mbox{if $x_1>x_2$,}\\
f_2(x_2), & \mbox{if $x_1<x_2$.}
\end{cases}
$$

The following result immediately follows from Theorem~\ref{thm:s7a897}.

\begin{proposition}
Let $k\in [n]$ and $f\in L^2(\I^n)$. If $x_{(k)}$ is ineffective almost everywhere for $f$, then $I(f,k)=0$.
\end{proposition}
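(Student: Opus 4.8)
The plan is to derive the result directly from Theorem~\ref{thm:s7a897}, which expresses $I(f,k)$ as the iterated integral
$$
I(f,k)=(n+1)(n+2)\int_{\I^n}\int_{x_{(k)}}^{x_{(k+1)}}\Delta_{(k),y-x_{(k)}}f(\bfx)\, dy\, d\bfx.
$$
First I would unpack the definition of ``$x_{(k)}$ ineffective almost everywhere'': it says precisely that $\Delta_{(k),y-x_{(k)}}f(\bfx)=0$ for almost all $\bfx\in\cup_{\pi\in S_n}\I^n_{\pi}$ and almost all $y\in\left]x_{(k-1)},x_{(k+1)}\right[$. Since the inner integral in the theorem runs over $y\in\left]x_{(k)},x_{(k+1)}\right[$, which is a subinterval of $\left]x_{(k-1)},x_{(k+1)}\right[$, the integrand $\Delta_{(k),y-x_{(k)}}f(\bfx)$ vanishes for almost every pair $(\bfx,y)$ in the region of integration.

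The one point that needs a word of care is the passage from ``the integrand is zero a.e.'' to ``the integral is zero.'' The hypothesis is stated as a statement about almost every $\bfx$ in $\cup_{\pi}\I^n_\pi$ together with almost every $y$ in the $\bfx$-dependent interval; by Fubini's theorem this is equivalent to the vanishing of the integrand almost everywhere (with respect to Lebesgue measure) on the set $\{(\bfx,y):\bfx\in\I^n,\ y\in[x_{(k)},x_{(k+1)}]\}$ — note that the excluded boundary set where two order statistics coincide has measure zero. Hence both the inner integral $\int_{x_{(k)}}^{x_{(k+1)}}\Delta_{(k),y-x_{(k)}}f(\bfx)\,dy$ vanishes for almost every $\bfx$, and consequently the outer integral over $\I^n$ vanishes as well, giving $I(f,k)=0$.

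I do not anticipate a genuine obstacle here; the proposition really is an ``immediate consequence,'' as the paper states. The only thing worth spelling out — and the step I would be most careful with — is the measure-theoretic bookkeeping that lets one interpret the ``almost all $\bfx$, almost all $y$'' hypothesis as joint vanishing a.e. and then integrate it away, since the domain of the inner integral depends on $\bfx$; invoking Fubini on the joint domain handles this cleanly. I would therefore keep the proof to two or three sentences: restate the formula from Theorem~\ref{thm:s7a897}, observe that the integrand is zero almost everywhere on the domain of integration because $\left]x_{(k)},x_{(k+1)}\right[\subseteq\left]x_{(k-1)},x_{(k+1)}\right[$, and conclude $I(f,k)=0$.
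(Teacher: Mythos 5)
Your proof is correct and follows exactly the route the paper intends: the paper gives no separate argument, stating only that the proposition ``immediately follows from Theorem~\ref{thm:s7a897},'' and your write-up simply makes explicit the observation that the inner integration range $\left]x_{(k)},x_{(k+1)}\right[$ is contained in $\left]x_{(k-1)},x_{(k+1)}\right[$, so the integrand vanishes almost everywhere and the integral is zero. The extra measure-theoretic remark about Fubini on the $\bfx$-dependent domain is sound and harmless, though the paper does not bother with it.
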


The \emph{dual} of a function $f\colon\I^n\to\R$ is the function $f^d\colon\I^n\to\R$ defined by
$f^d(\bfx)=1-f(\mathbf{1}_{[n]}-\bfx)$. A function $f\colon\I^n\to\R$ is said to be \emph{self-dual} if $f^d=f$. By using the change of
variables theorem, we immediately derive the following result.

\begin{proposition}
For every $f\in L^2(\I^n)$ and every $k\in [n]$, we have $I(f^d,k)=I(f,n-k+1)$. In particular, if $f$ is self-dual, then $I(f,k)=I(f,n-k+1)$.
\end{proposition}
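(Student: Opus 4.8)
The plan is to compute $I(f^d,k)$ from the inner-product form of the index, $I(f,k)=\langle f,g_k\rangle$ (Definition~\ref{de:dfdgf78}), and to reduce the whole statement to a single change of variables $\bfx\mapsto\mathbf{1}_{[n]}-\bfx$ on $\I^n$. The one structural fact I would record first is the effect of this reflection on the order statistic functions: sorting the numbers $1-x_1,\ldots,1-x_n$ in ascending order reverses the ascending order of $x_1,\ldots,x_n$, so
$$
\os_j(\mathbf{1}_{[n]}-\bfx)=1-\os_{n-j+1}(\bfx)\qquad(j=0,1,\ldots,n+1),
$$
and this is consistent with the formal conventions $\os_0\equiv 0$ and $\os_{n+1}\equiv 1$ (the cases $j=0$ and $j=n+1$ both return exactly the prescribed constant).

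Next I would feed this into the definition (\ref{eq:s7d9f}) of $g_k=-(n+1)(n+2)(\os_{k+1}-2\os_k+\os_{k-1})$. Applying the reflection identity termwise, the three additive constants it produces enter with the weights $1,-2,1$ of the central second difference and hence cancel, while the surviving order-statistic terms, after the index shift $j\mapsto n-j+1$, reassemble exactly into $-g_{n-k+1}$. Thus
$$
g_k(\mathbf{1}_{[n]}-\bfx)=-g_{n-k+1}(\bfx)\qquad(k\in[n]),
$$
which also makes sense at the extreme indices $k=1$ and $k=n$ precisely because the reflection identity above was recorded for the full range $j\in\{0,\ldots,n+1\}$, where $\os_0$ and $\os_{n+1}$ are the constants occurring in $\delta_k^2\,\os_k$.

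With these two identities the proof is a short computation. Writing $f^d(\bfx)=1-f(\mathbf{1}_{[n]}-\bfx)$ and using linearity of the index (Proposition~\ref{prop:asd7fd}) together with $I(1,k)=\langle 1,g_k\rangle=0$, we get $I(f^d,k)=-\langle f(\mathbf{1}_{[n]}-\,\cdot\,),g_k\rangle$. Performing the change of variables $\bfy=\mathbf{1}_{[n]}-\bfx$ (the cube is invariant and the Jacobian has absolute value $1$) turns this into $-\langle f,g_k(\mathbf{1}_{[n]}-\,\cdot\,)\rangle$, and the identity for $g_k(\mathbf{1}_{[n]}-\,\cdot\,)$ gives $\langle f,g_{n-k+1}\rangle=I(f,n-k+1)$. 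The particular case is then immediate: if $f^d=f$ then $I(f,k)=I(f^d,k)=I(f,n-k+1)$. There is no genuine obstacle in this argument; the only point that deserves attention is making the reflection rule for $\os_j$ hold uniformly over $j\in\{0,\ldots,n+1\}$, so that the derivation of $g_k(\mathbf{1}_{[n]}-\bfx)=-g_{n-k+1}(\bfx)$ is equally valid at $k=1$ and $k=n$.
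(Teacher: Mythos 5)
Your proof is correct and follows exactly the route the paper intends: the paper simply states that the result is "immediately derived by the change of variables theorem," and your argument is a careful spelling-out of that reflection $\bfx\mapsto\mathbf{1}_{[n]}-\bfx$, including the key identities $\os_j(\mathbf{1}_{[n]}-\bfx)=1-\os_{n-j+1}(\bfx)$ and $g_k(\mathbf{1}_{[n]}-\bfx)=-g_{n-k+1}(\bfx)$ together with $\langle 1,g_k\rangle=0$. Nothing to add.
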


\section{Alternative expressions for the index}

The computation of the index $I(f,k)$ by means of (\ref{eq:dfdgf78}) or (\ref{eq:sdf79}) might be not very convenient due to the presence of the
order statistic functions. To make those integrals either more tractable or easier to evaluate numerically, we provide in this section some
alternative expressions for the index $I(f,k)$ that do not involve any order statistic.

We first derive useful formulas for the computation of the integral $\langle f,\mathrm{os}_k\rangle$ (Proposition~\ref{prop:df46}). To this
extent, we consider the following direct generalization of order statistic functions.

\begin{definition}
For every nonempty $S=\{i_1,\ldots,i_s\}\subseteq [n]$, $s=|S|$, and every $k\in [s]$, we define the function $\mathrm{os}_{k:S}\colon
\I^n\to\R$ as $\mathrm{os}_{k:S}(\bfx)=\mathrm{os}_{k:s}(x_{i_1},\ldots,x_{i_s})$.
\end{definition}

To simplify the notation, we will write $x_{k:S}$ for $\mathrm{os}_{k:S}(\bfx)$. Thus $x_{k:S}$ is the $k$th order statistic of the variables in
$S$.

\begin{lemma}\label{lemma:sd7f}
For every $s\in [n]$ and every $k\in [s]$, we have
\begin{equation}\label{eq:0sf98}
\sum_{\textstyle{S\subseteq [n]\atop |S|=s}} x_{k:S} = \sum_{j=k}^n{j-1\choose k-1}{n-j\choose s-k}\, x_{j:n}.
\end{equation}
\end{lemma}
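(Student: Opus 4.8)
The plan is to prove the identity \eqref{eq:0sf98} by a combinatorial/counting argument, evaluating both sides at a fixed point $\bfx$ whose coordinates we may assume (by a density or continuity argument, since order statistic functions are continuous) to be pairwise distinct. Once the $x_i$ are distinct, each quantity $x_{k:S}$ equals the unique coordinate $x_j$ such that exactly $k-1$ of the variables indexed by $S$ are strictly smaller than $x_j$ and exactly $s-k$ are strictly larger; in particular $x_{k:S}=x_{j:n}$ where $x_{j:n}$ is the $j$th smallest among all $n$ coordinates. So the left-hand side is a sum of the values $x_{1:n},\ldots,x_{n:n}$, each counted with a certain multiplicity, and it suffices to show that $x_{j:n}$ occurs with multiplicity $\binom{j-1}{k-1}\binom{n-j}{s-k}$.

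First I would fix $j\in\{k,\ldots,n\}$ and count the $s$-element subsets $S\subseteq[n]$ for which $x_{k:S}=x_{j:n}$. Writing $i$ for the index of the $j$th smallest coordinate (so $x_i=x_{j:n}$), such an $S$ must contain $i$, must contain exactly $k-1$ of the $j-1$ indices whose coordinates are smaller than $x_i$, and must contain exactly $s-k$ of the $n-j$ indices whose coordinates are larger than $x_i$. The number of such choices is $\binom{j-1}{k-1}\binom{n-j}{s-k}$, independently of which coordinate happens to be the $j$th smallest. Summing over $j$ from $k$ to $n$ (the range is forced: we need at least $k-1$ smaller indices and at least $s-k$ larger ones, i.e. $j-1\geqslant k-1$ and $n-j\geqslant s-k$, though the binomial coefficients vanish outside this range anyway) yields exactly the right-hand side of \eqref{eq:0sf98}.

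Finally I would note that the identity of functions follows from the identity of values: both sides of \eqref{eq:0sf98} are continuous functions on $\I^n$ (finite sums of continuous order statistic functions), they agree on the dense subset of points with pairwise distinct coordinates, and hence they agree everywhere.

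The main obstacle is essentially bookkeeping rather than anything deep: one must be careful that the argument is genuinely independent of the point $\bfx$ chosen — that is, that the multiplicity with which $x_{j:n}$ appears depends only on $j$, $k$, $s$, and $n$ and not on the actual permutation that sorts the coordinates. This is exactly what makes the per-$j$ count clean: conditioning on "$x_i$ is the $j$th order statistic" pins down how many of the remaining indices lie below it and how many above, and the subset count then factors as a product of two independent binomial choices. A minor point to handle cleanly is the reduction to distinct coordinates; alternatively one could avoid it entirely by an inclusion–exclusion or generating-function manipulation, but the distinct-coordinate counting argument is the most transparent route.
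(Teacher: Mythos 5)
Your proof is correct and follows essentially the same route as the paper's: reduce to pairwise distinct coordinates by continuity, then count the $s$-subsets $S$ with $x_{k:S}=x_{j:n}$ as $\binom{j-1}{k-1}\binom{n-j}{s-k}$. The only cosmetic difference is that the paper additionally invokes symmetry to assume $x_1<\cdots<x_n$ outright, whereas you track the sorting permutation explicitly; the counting is identical.
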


\begin{proof}
Since both sides of (\ref{eq:0sf98}) are symmetric and continuous functions on $\I^n$, we can assume $x_1<\cdots <x_n$. Then, for every $j\in
[n]$, we have $x_{k:S}=x_j$ if and only if $S\ni j$ and $|S\cap[j-1]|=k-1$. The result then follows by counting those sets $S$ of cardinality
$s$ and having these two properties.
\end{proof}

\begin{lemma}\label{lemma:dfs7}
For every $k\in [n]$, we have
\begin{eqnarray}
x_{k:n} &=& \sum_{\textstyle{S\subseteq [n]\atop |S|\geqslant k}}(-1)^{|S|-k}{|S|-1\choose k-1}\, x_{|S|:S}\label{eq:0sf98a}\\
x_{k:n} &=& \sum_{\textstyle{S\subseteq [n]\atop |S|\geqslant n-k+1}}(-1)^{|S|-n+k-1}{|S|-1\choose n-k}\, x_{1:S}\label{eq:0sf98b}
\end{eqnarray}
\end{lemma}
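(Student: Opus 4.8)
The plan is to prove \eqref{eq:0sf98a} by substituting the identity \eqref{eq:0sf98} of Lemma~\ref{lemma:sd7f} (taken with $s=|S|$, so that $x_{s:S}$ is the \emph{maximum} of the variables indexed by $S$) into the right-hand side of \eqref{eq:0sf98a} and then checking that the resulting double sum collapses to $x_{k:n}$. As in the proof of Lemma~\ref{lemma:sd7f}, both sides are symmetric and continuous on $\I^n$, so it suffices to verify the identity under the assumption $x_1<\cdots<x_n$. Grouping the sum over subsets $S$ by their cardinality $s=|S|$, the right-hand side of \eqref{eq:0sf98a} becomes
$$
\sum_{s=k}^n(-1)^{s-k}\binom{s-1}{k-1}\sum_{\textstyle{S\subseteq[n]\atop|S|=s}}x_{s:S}
=\sum_{s=k}^n(-1)^{s-k}\binom{s-1}{k-1}\sum_{j=s}^n\binom{j-1}{s-1}x_{j:n},
$$
where I have applied \eqref{eq:0sf98} with "$k$" replaced by $s$ (legitimate since $s\in[s]$). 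Interchanging the order of summation, the coefficient of $x_{j:n}$ is $\sum_{s=k}^{j}(-1)^{s-k}\binom{s-1}{k-1}\binom{j-1}{s-1}$.

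The key combinatorial step is then to show that this coefficient equals $1$ when $j=k$ and $0$ when $j>k$. Using the subset-of-a-subset identity $\binom{j-1}{s-1}\binom{s-1}{k-1}=\binom{j-1}{k-1}\binom{j-k}{s-k}$, the coefficient of $x_{j:n}$ becomes
$$
\binom{j-1}{k-1}\sum_{s=k}^{j}(-1)^{s-k}\binom{j-k}{s-k}=\binom{j-1}{k-1}\sum_{t=0}^{j-k}(-1)^t\binom{j-k}{t}=\binom{j-1}{k-1}\,(1-1)^{j-k},
$$
which is $0$ for $j>k$ and $\binom{k-1}{k-1}=1$ for $j=k$. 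Hence the right-hand side of \eqref{eq:0sf98a} reduces to $x_{k:n}$, as claimed.

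For \eqref{eq:0sf98b} I would use the order-reversing symmetry between minima and maxima: replacing each variable $x_i$ by $1-x_i$ (or, equivalently, applying \eqref{eq:0sf98a} to the reversed order, where $x_{1:S}$ plays the role of the extreme statistic and $x_{k:n}$ becomes $x_{n-k+1:n}$) turns the $k$th order statistic into the $(n-k+1)$st and interchanges $x_{|S|:S}$ with $x_{1:S}$; substituting $k\mapsto n-k+1$ and noting $\binom{|S|-1}{n-k}$ is the mirror of $\binom{|S|-1}{k-1}$, together with the sign bookkeeping $(-1)^{|S|-k}\mapsto(-1)^{|S|-(n-k+1)}$, yields \eqref{eq:0sf98b}. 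Alternatively one can run the same direct computation as above, this time feeding in the analogue of \eqref{eq:0sf98} for $x_{1:S}=x_{j:n}$ (which requires $S\ni j$ and $S\cap[j-1]=\varnothing$, giving the count $\binom{n-j}{|S|-1}$), and invoking the same alternating-binomial-sum vanishing. The only real obstacle is the clean handling of the binomial coefficient manipulation and the index shifts; everything else is routine, and the symmetry/continuity reduction to $x_1<\cdots<x_n$ removes any measure-theoretic subtlety.
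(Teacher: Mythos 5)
Your proposal is correct and follows essentially the same route as the paper: substitute \eqref{eq:0sf98} with $k=s=|S|$ into the right-hand side of \eqref{eq:0sf98a}, swap the order of summation, use the subset-of-a-subset identity to reduce the coefficient of $x_{j:n}$ to $\binom{j-1}{k-1}(1-1)^{j-k}$, and obtain \eqref{eq:0sf98b} by dualization ($x_i\mapsto 1-x_i$), exactly as in the paper's proof.
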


\begin{proof}
By using (\ref{eq:0sf98}), we can rewrite the right-hand side of (\ref{eq:0sf98a}) as
\begin{eqnarray*}
\sum_{s=k}^n(-1)^{s-k}{s-1\choose k-1}\, \sum_{\textstyle{S\subseteq [n]\atop |S|=s}}x_{s:S} &=& \sum_{s=k}^n(-1)^{s-k}{s-1\choose
k-1}\,\sum_{j=s}^n{j-1\choose s-1}\, x_{j:n}\\
&=& \sum_{j=k}^n{j-1\choose k-1}x_{j:n}\,\sum_{s=k}^j(-1)^{s-k}{j-k\choose s-k}
\end{eqnarray*}
where the inner sum equals $(1-1)^{j-k}$. This proves (\ref{eq:0sf98a}). Identity (\ref{eq:0sf98b}) then follows by dualization.
\end{proof}

We now provide four formulas for the computation of the integral $\int_{\I^n}f(\bfx)\, x_{(k)}\, d\bfx$. From these formulas we will easily
derive alternative expressions for the index $I(f,k)$.

\begin{proposition}\label{prop:df46}
For every function $f\in L^2(\I^n)$ and every $k\in [n]$, the integral $J_{k:n}=\int_{\I^n}f(\bfx)\, x_{(k)}\, d\bfx$ is given by each of the
following expressions:
\begin{eqnarray}
& \int_{\I^n}f(\bfx)\, d\bfx-\sum_{S\subseteq [n]:|S|\geqslant k}(-1)^{|S|-k}{|S|-1\choose
k-1}\int_0^1\int_{[0,y]^S}\int_{[0,1]^{[n]\setminus S}}f(\bfx)\, d\bfx\, dy & \label{eq:dfsg1}\\
& \sum_{S\subseteq [n]:|S|\geqslant n-k+1}(-1)^{|S|-n+k-1}{|S|-1\choose
n-k}\int_0^1\int_{[y,1]^S}\int_{[0,1]^{[n]\setminus S}}f(\bfx)\, d\bfx\, dy & \label{eq:dfsg2}\\
& \int_{\I^n}f(\bfx)\, d\bfx-\sum_{S\subseteq [n]:|S|\geqslant k}\int_0^1\int_{[0,y]^S}\int_{[y,1]^{[n]\setminus S}}f(\bfx)\, d\bfx\, dy & \label{eq:dfsg3}\\
& \sum_{S\subseteq [n]:|S|< k}\int_0^1\int_{[0,y]^S}\int_{[y,1]^{[n]\setminus S}}f(\bfx)\, d\bfx\, dy & \label{eq:dfsg4}
\end{eqnarray}
\end{proposition}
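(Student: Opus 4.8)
The plan is to prove the four expressions in turn, exploiting the identities for order statistics established in Lemmas~\ref{lemma:sd7f} and \ref{lemma:dfs7}, together with the observation that an order statistic is a (signed) sum of indicator-type integrals. First I would establish \eqref{eq:dfsg1}: starting from \eqref{eq:0sf98a}, write $x_{(k)}=\sum_{S:|S|\geqslant k}(-1)^{|S|-k}\binom{|S|-1}{k-1}\, x_{|S|:S}$, multiply by $f(\bfx)$ and integrate. Since $x_{|S|:S}=\max_{i\in S}x_i=\int_0^1(1-\mathbf{1}_{[0,y]^S}(\bfx))\, dy$ (i.e.\ $x_{|S|:S}=1-\int_0^1\prod_{i\in S}\mathbf{1}_{[x_i\leqslant y]}\, dy$ after the substitution $y\mapsto $ the threshold), each term $\int_{\I^n}f(\bfx)\, x_{|S|:S}\, d\bfx$ splits as $\int_{\I^n}f\, d\bfx - \int_0^1\int_{[0,y]^S}\int_{[0,1]^{[n]\setminus S}}f(\bfx)\, d\bfx\, dy$. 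Summing over $S$ and using $\sum_{S:|S|\geqslant k}(-1)^{|S|-k}\binom{|S|-1}{k-1}=1$ (which is just $f\equiv 1$ in \eqref{eq:0sf98a}, or a direct binomial computation) collapses the constant terms to a single $\int_{\I^n}f\, d\bfx$, yielding \eqref{eq:dfsg1}. Formula \eqref{eq:dfsg2} is obtained the same way from \eqref{eq:0sf98b}, using instead $x_{1:S}=\min_{i\in S}x_i=\int_0^1\mathbf{1}_{[y,1]^S}(\bfx)\, dy$, so that $\int_{\I^n}f\, x_{1:S}\, d\bfx=\int_0^1\int_{[y,1]^S}\int_{[0,1]^{[n]\setminus S}}f(\bfx)\, d\bfx\, dy$; here the coefficients already sum to the right constant, so no collapsing is needed. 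Alternatively, \eqref{eq:dfsg2} follows from \eqref{eq:dfsg1} by dualization, exactly as \eqref{eq:0sf98b} follows from \eqref{eq:0sf98a}.

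Next I would derive \eqref{eq:dfsg3} and \eqref{eq:dfsg4} by a more elementary, ``layer-cake'' argument that avoids the inclusion–exclusion coefficients entirely. For fixed $\bfx$ with distinct coordinates, $x_{(k)}=\int_0^1\mathbf{1}_{[x_{(k)}>y]}\, dy=\int_0^1\mathbf{1}[\,|\{i:x_i\leqslant y\}|\geqslant k\,]\, dy$; complementarily, $x_{(k)}=\int_0^1\bigl(1-\mathbf{1}[\,|\{i:x_i\leqslant y\}|< k\,]\bigr)\, dy$. Now the event $\{|\{i:x_i\leqslant y\}|=|S|$ with $\{i:x_i\leqslant y\}=S\}$ for a fixed $S$ corresponds exactly to $\bfx\in[0,y]^S\times[y,1]^{[n]\setminus S}$. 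Therefore $\int_{\I^n}f(\bfx)\, x_{(k)}\, d\bfx = \int_0^1\sum_{S:|S|\geqslant k}\int_{[0,y]^S}\int_{[y,1]^{[n]\setminus S}}f(\bfx)\, d\bfx\, dy$ in one reading and $=\int_{\I^n}f\, d\bfx-\int_0^1\sum_{S:|S|< k}\int_{[0,y]^S}\int_{[y,1]^{[n]\setminus S}}f(\bfx)\, d\bfx\, dy$ in the complementary reading, using $\int_0^1\sum_{S\subseteq[n]}\int_{[0,y]^S}\int_{[y,1]^{[n]\setminus S}}f\, d\bfx\, dy=\int_0^1\int_{\I^n}f\, d\bfx\, dy=\int_{\I^n}f\, d\bfx$ to split off the constant. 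Swapping the order of the $dy$-integral and the finite sum over $S$ (legitimate by Fubini–Tonelli, since $f\in L^2(\I^n)\subseteq L^1(\I^n)$ on the bounded cube) gives \eqref{eq:dfsg3} and \eqref{eq:dfsg4}.

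The only genuinely delicate points are bookkeeping ones: verifying that the binomial sums of the coefficients in \eqref{eq:0sf98a}–\eqref{eq:0sf98b} telescope correctly so that the ``$\int f$'' terms consolidate as claimed, and checking that the boundary sets $\{x_i=y\}$ and the set where the $x_i$ are not all distinct have measure zero, so that the almost-everywhere identities for $x_{(k)}$ as integrals of indicators are valid under the integral sign. Both are routine. I expect the main (if minor) obstacle to be the sign and coefficient verification in the passage from \eqref{eq:0sf98a} to \eqref{eq:dfsg1}; the layer-cake derivation of \eqref{eq:dfsg3}–\eqref{eq:dfsg4} is cleaner and could even be presented first, with \eqref{eq:dfsg1}–\eqref{eq:dfsg2} then recovered from Lemma~\ref{lemma:dfs7}. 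Either way, no single step is hard, and the whole proof is a short computation once the right representation of $x_{(k)}$ as a superposition of product-set indicators is in hand.
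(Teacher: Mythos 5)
Your treatment of \eqref{eq:dfsg1} and \eqref{eq:dfsg2} is correct and is essentially the paper's own route: the paper likewise reduces, via Lemma~\ref{lemma:dfs7}, to the integrals $\int_{\I^n} f(\bfx)\, x_{|S|:S}\, d\bfx$ and $\int_{\I^n} f(\bfx)\, x_{1:S}\, d\bfx$ and evaluates them through the distribution functions $F_{|S|:S}(y)$ and $F_{1:S}(y)$ together with $\int_0^1 y\, dF=\lim_{y\to 1}F(y)-\int_0^1F(y)\, dy$, which is exactly your layer-cake computation; the consolidation of the constant terms via $\sum_{|S|\geqslant k}(-1)^{|S|-k}\binom{|S|-1}{k-1}=1$ is also fine.

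Your derivation of \eqref{eq:dfsg3}--\eqref{eq:dfsg4}, however, contains a genuine error, located precisely at the step you dismiss as routine bookkeeping. With the paper's ascending convention $x_{(1)}\leqslant\cdots\leqslant x_{(n)}$, one has $x_{(k)}\leqslant y$ if and only if $|\{i:x_i\leqslant y\}|\geqslant k$, hence
\[
\mathbf{1}\bigl[x_{(k)}>y\bigr]=\mathbf{1}\bigl[\,|\{i:x_i\leqslant y\}|<k\,\bigr],
\]
not $\mathbf{1}\bigl[\,|\{i:x_i\leqslant y\}|\geqslant k\,\bigr]$ as you wrote. As written, your two ``readings'' therefore compute $\int_{\I^n}f(\bfx)\,(1-x_{(k)})\, d\bfx$ rather than $J_{k:n}$, and they deliver the complementary formulas $J_{k:n}=\sum_{|S|\geqslant k}(\cdots)$ and $J_{k:n}=\int_{\I^n}f\, d\bfx-\sum_{|S|<k}(\cdots)$, which are false in general: for $n=1$, $k=1$, $f(x)=x$, your version gives $\int_0^1\int_0^y x\, dx\, dy=1/6$, whereas $J_{1:1}=\int_0^1x^2\, dx=1/3=\int_0^1\int_y^1x\, dx\, dy$, as \eqref{eq:dfsg4} asserts. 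The fix is a one-character change (flip the inequality inside the indicator); once corrected, your partition of $\I^n$ by the set $S=\{i:x_i\leqslant y\}$ into the blocks $[0,y]^S\times[y,1]^{[n]\setminus S}$ reproduces the paper's argument, which obtains \eqref{eq:dfsg3} from $F_{k:n}(y)=\mu_f(\{\bfx : x_{(k)}\leqslant y\})=\sum_{|S|\geqslant k}\mu_f([0,y]^S\times[y,1]^{[n]\setminus S})$ and \eqref{eq:dfsg4} from the decomposition of the total mass over all $S$.
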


\begin{proof}
By linearity of the integrals, we can assume that $f$ has nonnegative values. Then, we define the measure $\mu_f$ as $\mu_f(A)=\int_{A}f(\bfx)\, d\bfx$ for every Borel subset $A$ of $\I^n$. To compute integral
$J_{k:n}$, we can use Lemma~\ref{lemma:dfs7} and compute only the integrals $J_{|S|:S}=\int_{\I^n}f(\bfx)\vee_{i\in S}x_i\,d\bfx$ and
$J_{1:S}=\int_{\I^n}f(\bfx)\wedge_{i\in S}x_i\,d\bfx$. To this extent, we define
$$
F_{|S|:S}(y) ~=~ \mu_f(\{\bfx\in\I^n : \vee_{i\in S}x_i\leqslant y\})~=~\int_{[0,y]^S}\int_{[0,1]^{[n]\setminus S}}f(\bfx)\, d\bfx
$$
and
\begin{eqnarray*}
F_{1:S}(y) &=& \mu_f(\{\bfx\in\I^n : \wedge_{i\in S}x_i\leqslant y\})~=~\mu_f(\I^n)-\mu_f(\{\bfx\in\I^n : \wedge_{i\in S}x_i> y\})\\
&=& \int_{\I^n}f(\bfx)\, d\bfx - \int_{[y,1]^S}\int_{[0,1]^{[n]\setminus S}}f(\bfx)\, d\bfx
\end{eqnarray*}
We then have
$$
J_{|S|:S}=\int_{\I^n}\vee_{i\in S}x_i\,d\mu_f=\int_0^1 y\, dF_{|S|:S}(y) = \lim_{y\to 1}F_{|S|:S}(y)-\int_0^1F_{|S|:S}(y)\, dy
$$
and similarly for $J_{1:S}$. This proves (\ref{eq:dfsg1}) and (\ref{eq:dfsg2}).

We prove (\ref{eq:dfsg3}) similarly by considering
\begin{eqnarray*}
F_{k:n}(y) &=& \mu_f(\{\bfx\in\I^n : x_{(k)}\leqslant y\})\\
&=& \mu_f(\cup_{|S|\geqslant k}\{\bfx\in\I^n : \vee_{i\in S}x_i\leqslant y < \wedge_{i\in [n]\setminus S}x_i\})\\
&=& \sum_{|S|\geqslant k}\int_0^1\int_{[0,y]^S}\int_{[y,1]^{[n]\setminus S}}f(\bfx)\, d\bfx\, dy.
\end{eqnarray*}
Finally, we prove (\ref{eq:dfsg4}) by observing that
\begin{multline*}
\int_{\I^n}f(\bfx)\, d\bfx ~=~ \mu_f(\I^n) ~=~ \mu_f(\cup_{|S|\geqslant 0}\{\bfx\in\I^n : \vee_{i\in S}x_i\leqslant y < \wedge_{i\in [n]\setminus S}x_i\})\\
~=~ \sum_{|S|\geqslant 0}\int_0^1\int_{[0,y]^S}\int_{[y,1]^{[n]\setminus S}}f(\bfx)\, d\bfx\, dy.\qedhere
\end{multline*}
\end{proof}

\begin{remark}
In the special case when $f$ is a probability density function on $\I^n$, the integral $\int_{\I^n}f(\bfx)\, x_{(k)}\, d\bfx$ is precisely the expected value $E_f(X_{k:n})$, which is well investigated in statistics (see \cite{BalRao98,DavNag03}).
\end{remark}

From Definition~\ref{de:dfdgf78} and Proposition~\ref{prop:df46}, we derive the following three formulas. The computations are
straightforward and thus omitted.

\begin{eqnarray}
& \frac{I(f,k)}{(n+1)(n+2)}=\sum_{S\subseteq [n]:|S|\geqslant k-1}(-1)^{|S|+1-k}{|S|+1\choose
k}\int_0^1\int_{[0,y]^S}\int_{[0,1]^{[n]\setminus S}}f(\bfx)\, d\bfx\, dy & \label{eq:dfsg5}\\
& \frac{I(f,k)}{(n+1)(n+2)}=\sum_{S\subseteq [n]:|S|\geqslant n-k}(-1)^{|S|-n+k-1}{|S|+1\choose
n-k+1}\int_0^1\int_{[y,1]^S}\int_{[0,1]^{[n]\setminus S}}f(\bfx)\, d\bfx\, dy & \label{eq:dfsg6}\\
& \frac{I(f,k)}{(n+1)(n+2)}=\big(\sum_{S\subseteq [n]:|S|=k-1}-\sum_{S\subseteq [n]:|S|=k}\big)\int_0^1\int_{[0,y]^S}\int_{[y,1]^{[n]\setminus S}}f(\bfx)\, d\bfx\, dy.\label{eq:dfsg7}
\end{eqnarray}

\section{Some examples}

We now apply our results to two special classes of functions, namely the multiplicative functions and the Lov\'asz extensions of pseudo-Boolean
functions. The latter class includes the so-called discrete Choquet integrals, well-known in aggregation function theory.

\subsection{Multiplicative functions}

Consider the function $f(\bfx)=\prod_{i=1}^n\varphi_i(x_i)$, where $\varphi_i\in L^2(\I)$, and set $\Phi_i(x)=\int_0^x\varphi_i(t)\, dt$  for
$i=1,\ldots,n$. By using (\ref{eq:dfsg5}), we obtain
\begin{equation}\label{eq:xcvx09c}
\frac{I(f,k)}{(n+1)(n+2)}=\sum_{\textstyle{S\subseteq [n]\atop |S|\geqslant k-1}}(-1)^{|S|+1-k}{|S|+1\choose k}\prod_{i\in [n]\setminus
S}\Phi_i(1)\int_0^1\prod_{i\in S}\Phi_i(y)\, dy
\end{equation}

The following result gives a concise expression for $I(f,k)$ when $f$ is symmetric.

\begin{proposition}\label{prop:asf98}
Let $f\colon\I^n\to\R$ be given by $f(\bfx)=\prod_{i=1}^n\varphi(x_i)$, where $\varphi\in L^2(\I)$, and let $\Phi(x)=\int_0^x\varphi(t)\, dt$.
Then, for every $k\in [n]$, we have
$$
I(f,k)=
\begin{cases}
\Phi(1)^n\int_0^1 D_z h(z;k+1,n-k+2)|_{z=\Phi(y)/\Phi(1)}\, dy, & \mbox{if $\Phi(1)\neq 0$,}\\
(-1)^{n-k+1}(n+1)\,\frac{\Gamma(n+3)}{\Gamma(k+1)\,\Gamma(n-k+2)}\,\int_0^1\Phi(y)^n\, dy, & \mbox{if $\Phi(1)=0$,}
\end{cases}
$$
where $h(z;a,b)=z^{a-1}(1-z)^{b-1}/B(a,b)$ is the probability density function of the beta distribution with parameters $a$ and $b$.
\end{proposition}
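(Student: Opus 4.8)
The plan is to specialize the general multiplicative formula (\ref{eq:xcvx09c}) to the symmetric case and then recognize a beta density. Setting $\varphi_i=\varphi$ for all $i$ in (\ref{eq:xcvx09c}) makes the summand depend on $S$ only through $s=|S|$, since $\prod_{i\in[n]\setminus S}\Phi_i(1)=\Phi(1)^{n-s}$ and $\prod_{i\in S}\Phi_i(y)=\Phi(y)^s$; collecting the $\binom ns$ subsets of each size gives
$$\frac{I(f,k)}{(n+1)(n+2)}=\sum_{s=k-1}^n(-1)^{s+1-k}\binom ns\binom{s+1}{k}\,\Phi(1)^{n-s}\int_0^1\Phi(y)^s\,dy .$$
This single identity is the starting point for both cases.

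When $\Phi(1)=0$ (with the convention $0^0=1$) every term with $s<n$ drops out, leaving only $s=n$, so $I(f,k)=(-1)^{n-k+1}(n+1)(n+2)\binom{n+1}{k}\int_0^1\Phi(y)^n\,dy$; the elementary identity $(n+2)\binom{n+1}{k}=\Gamma(n+3)/(\Gamma(k+1)\Gamma(n-k+2))$ then produces the second case of the statement.

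When $\Phi(1)\neq 0$ I would substitute $z=\Phi(y)/\Phi(1)$, so that $\Phi(1)^{n-s}\Phi(y)^s=\Phi(1)^n z^s$, and factor $\Phi(1)^n$ out of the sum to obtain $I(f,k)=(n+1)(n+2)\,\Phi(1)^n\int_0^1 Q\big(\Phi(y)/\Phi(1)\big)\,dy$ with $Q(z)=\sum_{s=k-1}^n(-1)^{s+1-k}\binom ns\binom{s+1}{k}z^s$. It then remains to identify $(n+1)(n+2)\,Q(z)$ with $D_z h(z;k+1,n-k+2)$. Rather than differentiating $h$ and matching coefficients, I would integrate $Q$: since $k\geq1$ we have $h(0;k+1,n-k+2)=0$, so it suffices to prove the antiderivative identity $(n+1)(n+2)\int_0^z Q(t)\,dt=h(z;k+1,n-k+2)$. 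Using $\binom{s+1}{k}/(s+1)=\frac1k\binom{s}{k-1}$ together with $\binom ns\binom{s}{k-1}=\binom{n}{k-1}\binom{n-k+1}{s-k+1}$, and reindexing by $m=s-k+1$, the left-hand side collapses to $\frac{(n+1)(n+2)}{k}\binom{n}{k-1}z^k\sum_{m=0}^{n-k+1}\binom{n-k+1}{m}(-z)^m=\frac{(n+1)(n+2)}{k}\binom{n}{k-1}z^k(1-z)^{n-k+1}$ by the binomial theorem; finally $\frac{(n+1)(n+2)}{k}\binom{n}{k-1}=\frac{(n+2)!}{k!\,(n-k+1)!}=1/B(k+1,n-k+2)$, which is exactly the normalizing constant of $h(\cdot\,;k+1,n-k+2)$. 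Differentiating this identity in $z$ and substituting back $z=\Phi(y)/\Phi(1)$ yields the first case.

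The argument is essentially a bookkeeping exercise; the only step requiring genuine care is the chain of binomial identities in the last paragraph, together with a small amount of attention to the degenerate case $\Phi(1)=0$ and to the $0^0=1$ convention used when grouping the subsets $S$ by cardinality. I do not anticipate any real obstacle.
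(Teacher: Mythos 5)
Your proof is correct. For the case $\Phi(1)=0$ you do exactly what the paper does: only the term $S=[n]$ survives in (\ref{eq:xcvx09c}), and the constant $(n+2)\binom{n+1}{k}=\Gamma(n+3)/\big(\Gamma(k+1)\Gamma(n-k+2)\big)$ gives the stated form. For the case $\Phi(1)\neq 0$, however, your route differs from the paper's. The paper switches to formula (\ref{eq:dfsg7}), under which the multiplicative structure immediately yields the two-term expression
$$
\binom{n}{k-1}z^{k-1}(1-z)^{n-k+1}-\binom{n}{k}z^{k}(1-z)^{n-k}\Big|_{z=\Phi(y)/\Phi(1)},
$$
which is recognized on sight (after multiplying by $(n+1)(n+2)$) as $D_z h(z;k+1,n-k+2)$. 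You instead stay with the alternating sum coming from (\ref{eq:xcvx09c}) and prove the polynomial identity $(n+1)(n+2)\,Q(z)=D_z h(z;k+1,n-k+2)$ by integrating $Q$, using $\binom{s+1}{k}/(s+1)=\frac1k\binom{s}{k-1}$, the subset-of-a-subset identity $\binom ns\binom{s}{k-1}=\binom{n}{k-1}\binom{n-k+1}{s-k+1}$, and the binomial theorem; all of these steps check out, including the normalization $\frac{(n+1)(n+2)}{k}\binom{n}{k-1}=1/B(k+1,n-k+2)$ and the boundary condition $h(0;k+1,n-k+2)=0$ for $k\geqslant 1$. The paper's choice of (\ref{eq:dfsg7}) buys a shorter computation with the beta density appearing almost for free; your approach is more computational but has the minor advantage of running both cases of the proposition out of the single identity (\ref{eq:xcvx09c}), so nothing beyond that one formula is needed.
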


\begin{proof}
Suppose that $\Phi(1)\neq 0$. By using (\ref{eq:dfsg7}), we obtain
\begin{multline*}
\frac{I(f,k)}{(n+1)(n+2)} = \Bigg(\sum_{\textstyle{S\subseteq [n]\atop |S|=k-1}}-\sum_{\textstyle{S\subseteq [n]\atop
|S|=k}}\Bigg)\int_0^1\Phi(y)^{|S|}\big(\Phi(1)-\Phi(y)\big)^{n-|S|}\, dy\\
= \Phi(1)^n\int_0^1\Big({n\choose k-1}z^{k-1}(1-z)^{n-k+1}-{n\choose k}z^k(1-z)^{n-k}\Big)\Big|_{z=\Phi(y)/\Phi(1)}\, dy,
\end{multline*}
which proves the result. The case $\Phi(1)= 0$ follows from (\ref{eq:xcvx09c}).
\end{proof}

\begin{example}\label{ex:0sd7}
Let $f\colon\I^n\to\R$ be given by $f(\bfx)=\big(\prod_{i=1}^n x_i\big)^c$, where $c>-\frac 12$. For instance, the product function corresponds to $c=1$ and the geometric mean function to $c=1/n$. We can calculate $I(f,k)$ by using
Proposition~\ref{prop:asf98} with $\varphi(x)=x^c$. Using the substitution $z=y^{c+1}$ and then integrating by parts, we obtain
$$
I(f,k) ~=~ c\,\Big(\frac{1}{c+1}\Big)^{n+2}\,\frac{\Gamma(n+3)\,\Gamma(k-1+\frac{1}{c+1})}{\Gamma(k+1)\,\Gamma(n+1+\frac{1}{c+1})}%
~=~ \frac{\Gamma(k-1+\frac{1}{c+1})}{\Gamma(k+1)\,\Gamma(\frac{1}{c+1})}\, I(f,1),
$$
with
$$
I(f,1)= c\,\big(\frac{1}{c+1}\big)^{n+2}\,\frac{\Gamma(n+3)\,\Gamma(\frac{1}{c+1})}{\Gamma(n+1+\frac{1}{c+1})}\,.
$$
We observe that $I(f,k)\to I(f,1)$ as $c\to -\frac 12$. Also, for $c>0$, we have $I(f,k+1)<I(f,k)$ for every $k\in [n-1]$. As expected in this case, the smallest variables are more
influent on $f$ than the largest ones.
\end{example}

\subsection{Lov\'asz extensions}

Recall that an $n$-place (\emph{lattice}) \emph{term function} $p\colon\I^n\to\I$ is a combination of projections $\mathbf{x}\mapsto x_i$ $(i\in
[n])$ using the fundamental lattice operations $\wedge$ and $\vee$; see \cite{BurSan81}. For instance,
$$
p(x_1,x_2,x_3)=(x_1\wedge x_2)\vee x_3
$$
is a $3$-place term function. Note that, since $\I$ is a bounded chain, here the lattice operations $\wedge$ and $\vee$ reduce to the minimum and
maximum functions, respectively.

Clearly, any shifted linear combination of $n$-place term functions
$$
f(\bfx)=c_0+\sum_{i=1}^mc_i\, p_i(\bfx)
$$
is a continuous function whose restriction to any standard simplex $\I^n_{\pi}$ $(\pi\in S_n)$ is a shifted linear function. According to
Singer~\cite[{\S}2]{Sin84}, $f$ is then the \emph{Lov\'asz extension} of the pseudo-Boolean function $f|_{\{0,1\}^n}$, that is, the
continuous function $f\colon\I^n\to\R$ which is defined on each standard simplex $\I^n_{\pi}$ as the unique affine function that
coincides with $f|_{\{0,1\}^n}$ at the $n+1$ vertices of $\I^n_{\pi}$. Singer showed that a Lov\'asz extension can always be written as
\begin{equation}\label{eq:8a0wedsaf}
f(\bfx)=f_{n+1}^{\pi}+\sum_{i=1}^n (f_i^{\pi}-f_{i+1}^{\pi})\, x_{\pi(i)}\qquad (\bfx\in\I^n_{\pi}),
\end{equation}
with $f_i^{\pi}=f(\mathbf{1}_{\{\pi(i),\ldots,\pi(n)\}})=v_f(\{\pi(i),\ldots,\pi(n)\})$ for $i\in [n+1]$, where the set function $v_f\colon
2^{[n]}\to\R$ is defined as $v_f(S)=f(\mathbf{1}_S)$. In particular, $f_{n+1}^{\pi}=c_0=f(\mathbf{0})$. Conversely, any continuous function
$f\colon\I^n\to\R$ that reduces to an affine function on each standard simplex is a shifted linear combination of term functions:
\begin{equation}\label{eq:8a0dsaf}
f(\bfx)=\sum_{S\subseteq [n]} m_f(S)\, x_{1:S},
\end{equation}
where $m_f\colon 2^{[n]}\to\R$ is the \emph{M\"obius transform} of $v_f$, defined as
$$
m_f(S)=\sum_{T\subseteq S}(-1)^{|S|-|T|}\, v_f(T).
$$
Indeed, expression (\ref{eq:8a0dsaf}) reduces to an affine function on each standard simplex and agrees with $f(\mathbf{1}_S)$ at $\mathbf{1}_S$
for every $S\subseteq [n]$. Thus the class of shifted linear combinations of $n$-place term functions is precisely the class of $n$-place
Lov\'asz extensions.

\begin{remark}
A nondecreasing Lov\'asz extension $f\colon\I^n\to\R$ such that $f(\mathbf{0})=0$ is also called a \emph{discrete Choquet integral}. For general
background, see for instance \cite{GraMarMesPap09}.
\end{remark}

For every nonempty $S\subseteq [n]$ and every $k\in [|S|]$, the function $\mathrm{os}_{k:S}$ is a Lov\'asz extension and, from
(\ref{eq:0sf98a}), we have
$$
x_{k:S} = \sum_{\textstyle{T\subseteq S\atop |T|\geqslant k}}(-1)^{|T|-k}{|T|-1\choose k-1}\, x_{|T|:T}
$$

The following proposition gives a concise expression for the index $I(\mathrm{os}_{j:S},k)$. We first consider a lemma.

\begin{lemma}\label{lemma:dsd78s}
For every nonempty $S\subseteq [n]$ and every $j\in [|S|]$, we have
$$
\mathrm{Sym}(\mathrm{os}_{j:S})=\frac{1}{{n\choose |S|}}\sum_{\textstyle{T\subseteq [n]\atop |T|=|S|}}\mathrm{os}_{j:T}.
$$
\end{lemma}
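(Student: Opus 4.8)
The plan is to compute $\mathrm{Sym}(\os_{j:S})$ directly from the definition $\mathrm{Sym}(f)=\frac{1}{n!}\sum_{\pi\in S_n}\pi(f)$ and then regroup the sum over permutations into a sum over the $|S|$-element subsets of $[n]$. Write $s=|S|$ throughout.

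The one point that needs care is how a permutation acts on an order statistic function attached to a subset. For any $\pi\in S_n$ and any $\bfx\in\I^n$ we have $\pi(\os_{j:S})(\bfx)=\os_{j:S}(x_{\pi(1)},\ldots,x_{\pi(n)})$, which by definition is the $j$th smallest entry of the family $(x_{\pi(i)})_{i\in S}$. Since $i\mapsto\pi(i)$ is a bijection from $S$ onto $\pi(S)=\{\pi(i):i\in S\}$, this family coincides (as a multiset of $s$ values) with $(x_\ell)_{\ell\in\pi(S)}$, so that $\pi(\os_{j:S})=\os_{j:\pi(S)}$. This identity, and the count below, are really the only substance of the proof; everything else is routine.

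Next I would collect the permutations according to the value of $\pi(S)$. Fix $T\subseteq[n]$ with $|T|=s$. A permutation $\pi$ satisfies $\pi(S)=T$ if and only if it restricts to a bijection $S\to T$ and (automatically) to a bijection $[n]\setminus S\to[n]\setminus T$; there are exactly $s!\,(n-s)!$ such permutations, independently of $T$. Hence
$$
\mathrm{Sym}(\os_{j:S})=\frac{1}{n!}\sum_{\pi\in S_n}\os_{j:\pi(S)}=\frac{s!\,(n-s)!}{n!}\sum_{\textstyle{T\subseteq [n]\atop |T|=s}}\os_{j:T}=\frac{1}{{n\choose s}}\sum_{\textstyle{T\subseteq [n]\atop |T|=s}}\os_{j:T},
$$
which is the asserted formula. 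I do not expect a genuine obstacle: the only things to verify carefully are the bookkeeping in $\pi(\os_{j:S})=\os_{j:\pi(S)}$ and the count $s!\,(n-s)!$ of permutations sending $S$ onto $T$. (As a sanity check, one may also note that both sides are symmetric functions on $\I^n$, so it would even suffice to compare them on the region $x_1<\cdots<x_n$; but the direct computation above is cleaner and needs no case analysis.)
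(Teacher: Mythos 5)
Your proof is correct and is essentially the paper's own argument: the paper likewise notes that $\pi(\os_{j:S})=\os_{j:\pi(S)}$ and then counts the $|S|!\,(n-|S|)!$ permutations mapping $S$ onto a given $T$ with $|T|=|S|$. Your write-up just spells out the bookkeeping a bit more explicitly.
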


\begin{proof}
It is easy to see that $\mathrm{Sym}(\mathrm{os}_{j:S})(\bfx)=\frac{1}{n!}\sum_{\pi\in S_n}\mathrm{os}_{j:\pi(S)}(\bfx)$. This proves the result
for there are exactly $|S|!(n-|S|)!$ permutations that map $S$ to a given set $T\subseteq [n]$ such that $|T|=|S|$.
\end{proof}

\begin{proposition}
For every nonempty $S\subseteq [n]$, every $j\in [|S|]$, and every $k\in [n]$, we have
\begin{equation}\label{eq:asdf79}
I(\mathrm{os}_{j:S},k)=\frac{{k-1\choose j-1}{n-k\choose |S|-j}}{{n\choose |S|}}
\end{equation}
if $0\leqslant k-j\leqslant n-|S|$, and $0$, otherwise.
\end{proposition}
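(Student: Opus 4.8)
The plan is to reduce the general statement to a computation involving the symmetrized function. By Corollary~\ref{cor:f7as8}, we have $I(\mathrm{os}_{j:S},k)=I(\mathrm{Sym}(\mathrm{os}_{j:S}),k)$, and by Lemma~\ref{lemma:dsd78s} this equals $\frac{1}{\binom{n}{|S|}}\sum_{|T|=|S|}I(\mathrm{os}_{j:T},k)$, where the sum runs over all $T\subseteq[n]$ with $|T|=|S|$. Since each $\mathrm{os}_{j:T}$ with $|T|=|S|$ is symmetrically equivalent to $\mathrm{os}_{j:S}$, every term in this sum is itself equal to $I(\mathrm{os}_{j:S},k)$, so this identity is merely consistent — what it buys us is that it suffices to compute $I(\mathrm{Sym}(\mathrm{os}_{j:S}),k)$ directly, and Lemma~\ref{lemma:dsd78s} together with Lemma~\ref{lemma:sd7f} lets us write $\mathrm{Sym}(\mathrm{os}_{j:S})$ as an explicit $L$-statistic function: combining the two lemmas,
\[
\mathrm{Sym}(\mathrm{os}_{j:S})=\frac{1}{\binom{n}{s}}\sum_{i=j}^n\binom{i-1}{j-1}\binom{n-i}{s-i+ (\,\cdot\,)}\,\os_i,
\]
more precisely $\mathrm{Sym}(\mathrm{os}_{j:S})=\frac{1}{\binom{n}{s}}\sum_{i=j}^n\binom{i-1}{j-1}\binom{n-i}{s-j}\,\os_i$ where $s=|S|$, using formula~(\ref{eq:0sf98}) with $k$ replaced by $j$.

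Once $\mathrm{Sym}(\mathrm{os}_{j:S})$ is expressed in the basis $\{\os_1,\dots,\os_n,\os_{n+1}\}$, the index is read off from the linear map $f\mapsto I(f,k)=\langle f,g_k\rangle = -(n+1)(n+2)\,\langle f,\delta_k^2\,\os_k\rangle$. Since $I$ is linear in $f$ (Proposition~\ref{prop:asd7fd}) and $g_k$ is, up to the constant $-(n+1)(n+2)$, the second central difference of $\os_k$, the only input needed is the value of $I(\os_i,k)$ for each $i\in[n]$ — equivalently, the matrix entries relating the $\os_i$ to the $a_k$. But these are exactly governed by $M^{-1}$ from Lemma~\ref{lemma:safadfd456}: writing $\os_i=\sum_j (M)_{ij}^{?}$ is not quite right, so instead I would use that $I(\os_i,k)=a_k$ where $\mathbf a=M^{-1}\mathbf b$ with $\mathbf b$ the $i$th column of $M$, i.e.\ $\mathbf a=M^{-1}M\mathbf e_i=\mathbf e_i$. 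Hence $I(\os_i,k)=\delta_{ik}$ for $i,k\in[n]$, and $I(\os_{n+1},k)=0$ for $k\in[n]$. Therefore
\[
I(\mathrm{os}_{j:S},k)=I(\mathrm{Sym}(\mathrm{os}_{j:S}),k)=\frac{1}{\binom{n}{s}}\binom{k-1}{j-1}\binom{n-k}{s-j},
\]
which is precisely the claimed right-hand side; the constraint $0\leqslant k-j\leqslant n-s$ is exactly the condition for both binomial coefficients $\binom{k-1}{j-1}$ and $\binom{n-k}{s-j}$ to be nonzero, and outside that range the expression vanishes.

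The routine parts are the binomial bookkeeping in assembling $\mathrm{Sym}(\mathrm{os}_{j:S})$ via Lemmas~\ref{lemma:sd7f} and~\ref{lemma:dsd78s}, and verifying the non-vanishing range. The one step that deserves care — and which I regard as the crux — is the observation that $I(\os_i,k)=\delta_{ik}$ on $[n]$: this follows cleanly from $\mathbf a=M^{-1}\mathbf b$ in~(\ref{eq:45xycv}) once one notes that the $\mathbf b$-vector of $\os_i$ is the $i$th column of $M$, so that $f_L=\os_i$ and the projection is the identity on $V_L$. Alternatively one can check $I(\os_i,k)=\langle\os_i,g_k\rangle=-(n+1)(n+2)\,\delta_k^2\,(M)_{ik}$ and use the explicit formula~(\ref{eq:w9rer89}) for $(M)_{ik}=\frac{\min(i,k)(\max(i,k)+1)}{(n+1)(n+2)}$, whose second central difference in $k$ is $-1$ at $k=i$ and $0$ otherwise (with the boundary conventions $\os_0\equiv 0$, $\os_{n+1}\equiv 1$ requiring a small separate check at $k=n$); either route gives $\delta_{ik}$. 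I would present the first route as it is shorter, relegating the $M^{-1}$ formula to a parenthetical cross-check.
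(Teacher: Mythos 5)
Your proposal is correct and follows essentially the same route as the paper, which proves the result by exactly the chain you describe: Corollary~\ref{cor:f7as8} to pass to $\mathrm{Sym}(\mathrm{os}_{j:S})$, Lemma~\ref{lemma:dsd78s} and Lemma~\ref{lemma:sd7f} to expand it in the basis $\{\os_1,\ldots,\os_{n+1}\}$, and then linearity together with the (implicit in the paper, correctly justified by you via $\mathbf{a}=M^{-1}M\mathbf{e}_i=\mathbf{e}_i$) fact that $I(\os_i,k)=\delta_{ik}$ on $[n]$. Your identification of $I(\os_i,k)=\delta_{ik}$ as the crux, and your check of the non-vanishing range, fill in precisely the details the paper leaves to the reader.
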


\begin{proof}
The result follows from Corollary~\ref{cor:f7as8}, Lemma~\ref{lemma:sd7f}, and Lemma~\ref{lemma:dsd78s}.\footnote{The right-hand side of
(\ref{eq:asdf79}) can also be viewed as the multivariate hypergeometric distribution $ \textstyle{{1\choose 1}{k-1\choose j-1}{n-k\choose
|S|-j}/{n\choose |S|}} $ obtained directly from the proof of Lemma~\ref{lemma:sd7f}.}
\end{proof}

The following proposition gives an explicit expression for the index $I(f,k)$ when $f$ is a Lov\'asz extension.

\begin{proposition}\label{prop:s80}
If $f\colon\I^n\to\R$ is a Lov\'asz extension, then
\begin{equation}\label{eq:8s798f}
f(\bfx)=f(\mathbf{0})+\sum_{i=1}^n x_{(i)}\, D_{(i)}f(\bfx).
\end{equation}
Moreover, for every $k\in [n]$, we have
\begin{equation}\label{eq:sb79s}
I(f,k)=\overline{v}_f(n-k+1)-\overline{v}_f(n-k)=\sum_{s=1}^{n-k+1}{n-k\choose s-1}\,\overline{m}_f(s)\, ,
\end{equation}
where $\overline{v}_f(s)={n\choose s}^{-1}\sum_{S\subseteq [n]: |S|=s}v_f(S)$ and $\overline{m}_f(s)={n\choose s}^{-1}\sum_{S\subseteq [n]:
|S|=s}m_f(S)$.
\end{proposition}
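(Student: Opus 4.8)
The plan is to establish the three assertions of Proposition~\ref{prop:s80} in sequence, using the decomposition \eqref{eq:8a0wedsaf} of a Lov\'asz extension on each standard simplex as the common starting point. For \eqref{eq:8s798f}, I would fix $\pi\in S_n$ and note that on $\I^n_{\pi}$ we have $f_i^{\pi}-f_{i+1}^{\pi}=D_{\pi(i)}f(\bfx)=D_{(i)}f(\bfx)$, since $f$ is affine on $\I^n_{\pi}$ with this slope in the direction $\pi(i)$; substituting into \eqref{eq:8a0wedsaf} and using $x_{\pi(i)}=x_{(i)}$ on $\I^n_{\pi}$ together with $f_{n+1}^{\pi}=f(\mathbf{0})$ gives $f(\bfx)=f(\mathbf{0})+\sum_{i=1}^n x_{(i)}\,D_{(i)}f(\bfx)$ on $\I^n_{\pi}$, and since the $\I^n_{\pi}$ cover $\I^n$ almost everywhere and both sides are continuous, the identity holds everywhere.

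For the first equality in \eqref{eq:sb79s}, I would apply the linearity of $f\mapsto I(f,k)$ (Proposition~\ref{prop:asd7fd}) to the representation \eqref{eq:8a0dsaf}, $f=\sum_{S\subseteq[n]}m_f(S)\,x_{1:S}$, obtaining $I(f,k)=\sum_{S}m_f(S)\,I(\mathrm{os}_{1:S},k)$. The constant term (the empty set, contributing $f(\mathbf{0})$) yields $I$-value $0$ by linearity, so the sum effectively runs over nonempty $S$. Now I plug in the closed form \eqref{eq:asdf79} with $j=1$: $I(\mathrm{os}_{1:S},k)={n-k\choose|S|-1}/{n\choose|S|}$ when $1\leqslant k\leqslant n-|S|+1$ and $0$ otherwise. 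Grouping the sum by $s=|S|$ and recalling $\overline{m}_f(s)={n\choose s}^{-1}\sum_{|S|=s}m_f(S)$ gives $I(f,k)=\sum_{s=1}^{n-k+1}{n-k\choose s-1}\,\overline{m}_f(s)$, which is the rightmost expression in \eqref{eq:sb79s}.

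It remains to identify this with $\overline{v}_f(n-k+1)-\overline{v}_f(n-k)$. Here I would express $\overline{v}_f$ in terms of $\overline{m}_f$: averaging the M\"obius inversion $v_f(T)=\sum_{S\subseteq T}m_f(S)$ over all $T$ of size $t$ and counting, for each $S$ of size $s$, the number ${n-s\choose t-s}$ of such $T\supseteq S$, one gets $\overline{v}_f(t)=\sum_{s=0}^{t}\frac{{n-s\choose t-s}}{{n\choose t}/{n\choose s}}\,\overline{m}_f(s)$; simplifying the binomial ratio ${n-s\choose t-s}{n\choose s}/{n\choose t}={t\choose s}$ yields the clean identity $\overline{v}_f(t)=\sum_{s=0}^{t}{t\choose s}\,\overline{m}_f(s)$. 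Then $\overline{v}_f(n-k+1)-\overline{v}_f(n-k)=\sum_{s=0}^{n-k+1}\big({n-k+1\choose s}-{n-k\choose s}\big)\overline{m}_f(s)=\sum_{s=1}^{n-k+1}{n-k\choose s-1}\overline{m}_f(s)$ by Pascal's rule (the $s=0$ term drops out), which matches the expression already derived. I expect the main obstacle to be purely bookkeeping: getting the binomial-coefficient identities and the index ranges exactly right, in particular verifying $\overline{v}_f(t)=\sum_{s\leqslant t}{t\choose s}\overline{m}_f(s)$ and handling the boundary cases $k=1$ and $k=n$ (where $\os_0\equiv0$, $\os_{n+1}\equiv1$ enter) consistently; none of the steps requires a genuinely new idea beyond linearity of the index and the already-established formula \eqref{eq:asdf79}.
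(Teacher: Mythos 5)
Your proof is correct, and for the first part of the proposition and for the rightmost expression in \eqref{eq:sb79s} it coincides with the paper's argument: both derive \eqref{eq:8s798f} from Singer's representation \eqref{eq:8a0wedsaf} by reading off the slope $f_i^{\pi}-f_{i+1}^{\pi}=D_{\pi(i)}f$ on each simplex, and both obtain $I(f,k)=\sum_{s=1}^{n-k+1}{n-k\choose s-1}\,\overline{m}_f(s)$ by applying linearity of $I(\cdot,k)$ to the M\"obius representation \eqref{eq:8a0dsaf} together with \eqref{eq:asdf79} for $j=1$. Where you genuinely diverge is the first equality $I(f,k)=\overline{v}_f(n-k+1)-\overline{v}_f(n-k)$: the paper proves it directly and independently, by plugging \eqref{eq:8s798f} into the integral formula \eqref{eq:sdf79} of Proposition~\ref{prop:sdf79}, evaluating $\int_{\I^n_{\pi}}(x_{\pi(k+1)}-x_{\pi(k)})(x_{\pi(k)}-x_{\pi(k-1)})\,d\bfx=1/(n+2)!$, and averaging $f_k^{\pi}-f_{k+1}^{\pi}$ over $S_n$ (each set $\{\pi(k),\ldots,\pi(n)\}$ of size $n-k+1$ arises from $(k-1)!\,(n-k+1)!$ permutations, giving $\overline{v}_f(n-k+1)$). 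You instead deduce it from the $\overline{m}_f$ form via the averaged M\"obius identity $\overline{v}_f(t)=\sum_{s\leqslant t}{t\choose s}\overline{m}_f(s)$ and Pascal's rule; your verification of that identity (the ratio ${n-s\choose t-s}{n\choose s}/{n\choose t}={t\choose s}$) and of the boundary terms is sound, and the identity is in fact stated as immediate later in the paper. Your route is purely combinatorial and avoids the simplex integral, at the cost of making the two equalities in \eqref{eq:sb79s} logically dependent rather than giving two independent derivations; the paper's route has the small bonus of exhibiting $I(f,k)$ directly as an average of the discrete derivatives $v_f(\{\pi(k),\ldots,\pi(n)\})-v_f(\{\pi(k+1),\ldots,\pi(n)\})$, which is the more illuminating formula from the aggregation-theory viewpoint. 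Both are complete proofs.
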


\begin{proof}
Let $f\colon\I^n\to\R$ be a Lov\'asz extension and let $k\in [n]$. For every $\pi\in S_n$ and every $\bfx\in\I^n_{\pi}$, from
(\ref{eq:8a0wedsaf}) if follows that $D_{\pi(k)}f(\bfx)=f_k^{\pi}-f_{k+1}^{\pi}$. This estalishes (\ref{eq:8s798f}).

By Proposition~\ref{prop:sdf79}, we then obtain
\begin{multline*}
I(f,k) = \sum_{\pi\in S_n}\int_{\I^n_{\pi}}h_k(\bfx)\, D_{\pi(k)}f(\bfx)\, d\bfx\\
= (n+1)(n+2)\sum_{\pi\in S_n}(f_k^{\pi}-f_{k+1}^{\pi})\int_{\I^n_{\pi}}(x_{\pi(k+1)}-x_{\pi(k)})(x_{\pi(k)}-x_{\pi(k-1)})\, d\bfx
\end{multline*}
Since the integral is equal to $1/(n+2)!$, we obtain $ I(f,k) = \frac{1}{n!}\sum_{\pi\in S_n}(f_k^{\pi}-f_{k+1}^{\pi}), $ which, after some
algebra, leads to the first equality in (\ref{eq:sb79s}). Finally, by combining Proposition~\ref{prop:asd7fd} with (\ref{eq:8a0dsaf}) and
(\ref{eq:asdf79}) (for $j=1$), we completely establish (\ref{eq:sb79s}).
\end{proof}

\begin{remark}
A function $f\colon\I^n\to\R$ solves equation~(\ref{eq:8s798f}) if and only if, for every $\pi\in S_n$, the function
$f|_{\I^n_{\pi}}-f(\mathbf{0})$ is an eigenfunction of the Euler operator with eigenvalue $1$. Thus this function reduces to a homogeneous
function of degree $1$ whenever it is differentiable. Notice however that such a function need not be linear even if $f$ is continuous on
$\I^n$. For instance, the geometric mean $f(\bfx)=\prod_{i=1}^nx_i^{1/n}$ is a continuous function solving (\ref{eq:8s798f}).
\end{remark}

We can readily see that the
shifted $L$-statistic functions are precisely the symmetric Lov\'asz extensions. From this observation we derive the following result.

\begin{proposition}
For any Lov\'asz extension $f\colon\I^n\to\R$, we have $f_L=\mathrm{Sym}(f)$ and
$$
\mathrm{Sym}(f)=f(\mathbf{0})+\sum_{i=1}^n I(f,i)\,\mathrm{os}_i.
$$
\end{proposition}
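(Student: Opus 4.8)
The plan is to prove the two assertions in turn, leaning on results already established. First I would verify the claim preceding the statement, namely that the shifted $L$-statistic functions are precisely the symmetric Lov\'asz extensions. One inclusion is immediate: each $\os_i$ is a Lov\'asz extension (it is a combination of term functions, e.g.\ via (\ref{eq:0sf98a})), and a constant is too, so every element of $V_L$ is a symmetric Lov\'asz extension. For the converse, a symmetric Lov\'asz extension $f$ has $v_f(S)$ depending only on $|S|$; plugging this into Singer's expansion (\ref{eq:8a0wedsaf}) gives $f_i^\pi$ depending only on $i$, not on $\pi$, so $f(\bfx)=f_{n+1}+\sum_{i=1}^n(f_i-f_{i+1})\,x_{(i)}$ on every simplex $\I^n_\pi$, hence globally, which is a shifted $L$-statistic function.

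Next, for the equality $f_L=\mathrm{Sym}(f)$ when $f$ is a Lov\'asz extension: the averaging operator $\mathrm{Sym}$ commutes with taking Lov\'asz extensions (it is a convex combination of the symmetries $\pi(f)$, each of which is again a Lov\'asz extension, and Lov\'asz extensions form a vector space), so $\mathrm{Sym}(f)$ is a symmetric Lov\'asz extension, hence $\mathrm{Sym}(f)\in V_L$ by the first step. By the discussion following Proposition~\ref{prop:sd9f7}, $f$ and $\mathrm{Sym}(f)$ have the same best shifted $L$-statistic approximation; but $\mathrm{Sym}(f)$ already lies in $V_L$, and the best approximation of a function already in $V_L$ is the function itself (the orthogonal projection onto $V_L$ fixes $V_L$). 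Therefore $f_L=(\mathrm{Sym}(f))_L=\mathrm{Sym}(f)$.

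Finally, the explicit formula follows from the Remark after the proof of the second Proposition of {\S}2, which states $f_L=\langle f,1\rangle+\sum_{k=1}^n I(f,k)\big(x_{(k)}-\tfrac{k}{n+1}\big)$, together with the observation that for a Lov\'asz extension $\langle f,1\rangle - \sum_{k=1}^n I(f,k)\tfrac{k}{n+1} = f(\mathbf 0)$. To see this last identity, note by (\ref{eq:8s798f}) (or directly from Corollary~\ref{cor:f7as8} and Remark~\ref{rem:as98d7f}) that $\mathrm{Sym}(f)=f(\mathbf 0)+\sum_{i=1}^n I(f,i)\,\os_i$ must be consistent with $\langle f,1\rangle = \langle\mathrm{Sym}(f),1\rangle$, which by (\ref{eq:6sa6df}) forces exactly the required cancellation; alternatively one computes $\langle\os_i,1\rangle=\int_{\I^n}x_{(i)}\,d\bfx=\tfrac{i}{n+1}$ from (\ref{eq:sd87f9}) and integrates (\ref{eq:8s798f}) term by term, using $\int_{\I^n}x_{(i)}D_{(i)}f(\bfx)\,d\bfx$ against the already-known value of $I(f,i)$ via Proposition~\ref{prop:sdf79}.

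I expect the only subtle point to be the clean justification that $\mathrm{Sym}(f)$ is again a Lov\'asz extension and lies in $V_L$ — i.e.\ the identification of $V_L$ with symmetric Lov\'asz extensions — since everything else is bookkeeping with identities proved earlier; the projection-fixes-its-range step and the constant-term computation are routine.
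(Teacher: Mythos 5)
Your argument for $f_L=\mathrm{Sym}(f)$ is essentially the paper's: $\mathrm{Sym}(f)$ is a symmetric Lov\'asz extension, hence lies in $V_L$; since $f$ and $\mathrm{Sym}(f)$ have the same projection onto $V_L$ (Propositions~\ref{prop:asd7fd} and~\ref{prop:sd9f7}) and the projection fixes $V_L$, the equality follows. Your expanded justification of the identification of $V_L$ with the symmetric Lov\'asz extensions (which the paper merely asserts) is correct and a welcome addition.

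The one place you diverge is the constant term, and there your primary justification is circular: you argue that $\mathrm{Sym}(f)=f(\mathbf 0)+\sum_i I(f,i)\,\os_i$ ``must be consistent with'' $\langle f,1\rangle=\langle\mathrm{Sym}(f),1\rangle$, but that display is precisely the identity you are trying to prove, so it cannot be invoked to pin down the constant. Your parenthetical alternative does work --- for a Lov\'asz extension $D_{(i)}f$ equals the constant $f_i^{\pi}-f_{i+1}^{\pi}$ on $\I^n_{\pi}$, so $\int_{\I^n}x_{(i)}D_{(i)}f(\bfx)\,d\bfx=\frac{i}{n+1}\,I(f,i)$, and integrating (\ref{eq:8s798f}) yields $\langle f,1\rangle-\sum_i\frac{i}{n+1}I(f,i)=f(\mathbf 0)$ --- but it is more work than needed. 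Since $f_L=\mathrm{Sym}(f)$ is continuous and $\os_i(\mathbf 0)=0$ for every $i\in[n]$ while $\os_{n+1}\equiv 1$, the constant term of $f_L$ is simply $f_L(\mathbf 0)=\mathrm{Sym}(f)(\mathbf 0)=f(\mathbf 0)$, which is how the paper concludes. I would replace the ``consistency'' sentence by this one-line evaluation at the origin (or keep only the alternative computation).
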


\begin{proof}
Let $f\colon\I^n\to\R$ be a Lov\'asz extension. Then $\mathrm{Sym}(f)$ is a symmetric Lov\'asz extension or, equivalently, a shifted $L$-statistic
function. By Propositions~\ref{prop:asd7fd} and \ref{prop:sd9f7}, we have $f_L=\mathrm{Sym}(f)_L=\mathrm{Sym}(f)$. The result then follows since $f_L(\mathbf{0})=\mathrm{Sym}(f)(\mathbf{0})=f(\mathbf{0})$.
\end{proof}

\section{Applications}

We briefly discuss some applications of the influence index in aggregation theory and statistics. We also introduce a normalized version of the
index as well as the coefficient of determination of the approximation problem.

\subsection{Influence index in aggregation theory}

Several indexes (such as interaction, tolerance, and dispersion indexes) have been proposed and investigated in aggregation theory to better
understand the general behavior of aggregation functions with respect to their variables; see \cite[Chap.~10]{GraMarMesPap09}. These indexes
enable one to classify the aggregation functions according to their behavioral properties. The index $I(f,k)$ can also be very informative and
thus contribute to such a classification. As an example, we have computed this index for the arithmetic mean and geometric mean functions (see
Remark~\ref{rem:as98d7f} and Example~\ref{ex:0sd7}) and we can observe for instance that the smallest variable $x_{(1)}$ has a larger influence
on the latter function.

\begin{remark}
Noteworthy aggregation functions are the so-called conjunctive aggregation functions, that is, nondecreasing functions $f\colon\I^n\to\R$
satisfying $0\leqslant f(\bfx)\leqslant x_{(1)}$; see \cite[Chap.~3]{GraMarMesPap09}. Although these functions are bounded from above by
$x_{(1)}$, the index $I(f,k)$ need not be maximum for $k=1$. For instance, for the binary conjunctive aggregation function
$$
f(x_1,x_2)=
\begin{cases}
0, & \mbox{if $x_1\vee x_2<\frac 34$},\\
x_1\wedge x_2\wedge \frac 14, & \mbox{otherwise},
\end{cases}
$$
we have $I(f,1)=\frac{17}{128}$ and $I(f,2)=\frac{19}{64}$, and hence $I(f,1)<I(f,2)$.
\end{remark}

In the framework of aggregation functions, it can be natural to consider and identify the functions $f\in L^2(\I^n)$ for which the order
statistics are equally influent, that is, such that $I(f,k)=I(f,1)$ for all $k\in [n]$. As far as the Lov\'asz extensions are concerned, we have
the following result, which can be easily derived from Proposition~\ref{prop:s80} and the immediate identities
$$
\overline{v}_f(s)=\sum_{t=0}^s{s\choose t}\overline{m}_f(t)\quad\mbox{and}\quad\overline{m}_f(s)=\sum_{t=0}^s(-1)^{s-t}{s\choose t}\overline{v}_f(t).
$$

\begin{proposition}
If $f\colon\I^n\to\R$ is a Lov\'asz extension, then the following are equivalent.
\begin{enumerate}
\item[(a)] We have $I(f,k)=I(f,1)$ for all $k\in [n]$.

\item[(b)] The sequence $(\overline{v}_f(s))_{s=0}^n$ is in arithmetic progression.

\item[(c)] We have $\overline{m}_f(s)=0$ for $s=2,\ldots,n$.
\end{enumerate}
\end{proposition}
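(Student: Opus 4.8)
The plan is to prove the equivalences in a cycle, say $(a)\Rightarrow(b)\Rightarrow(c)\Rightarrow(a)$, using Proposition~\ref{prop:s80} as the main engine together with the two conversion identities relating $\overline{v}_f$ and $\overline{m}_f$ stated just before the proposition. The key observation is that $(c)\Leftrightarrow(b)$ is essentially immediate from those identities: if $\overline{m}_f(s)=0$ for $s\geqslant 2$ then $\overline{v}_f(s)=\overline{m}_f(0)+s\,\overline{m}_f(1)$ is affine in $s$, hence in arithmetic progression; conversely, if $\overline{v}_f(s)=\alpha+\beta s$ then the second-difference-type identity $\overline{m}_f(s)=\sum_{t=0}^s(-1)^{s-t}{s\choose t}\overline{v}_f(t)$ annihilates affine sequences for $s\geqslant 2$ (since $\sum_{t=0}^s(-1)^{s-t}{s\choose t}=0$ and $\sum_{t=0}^s(-1)^{s-t}{s\choose t}t=0$ for $s\geqslant 2$). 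So really the content is the equivalence of $(a)$ with $(b)$ (or $(c)$).

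For $(b)\Rightarrow(a)$, I would start from the first equality in (\ref{eq:sb79s}), namely $I(f,k)=\overline{v}_f(n-k+1)-\overline{v}_f(n-k)$. If $(\overline{v}_f(s))_{s=0}^n$ is in arithmetic progression with common difference $d$, then $\overline{v}_f(n-k+1)-\overline{v}_f(n-k)=d$ for every $k\in[n]$, so in particular $I(f,k)=d=I(f,1)$ for all $k$. This direction is a one-liner. For the converse $(a)\Rightarrow(b)$, again from $I(f,k)=\overline{v}_f(n-k+1)-\overline{v}_f(n-k)$, the hypothesis $I(f,k)=I(f,1)$ for all $k\in[n]$ says precisely that all the consecutive differences $\overline{v}_f(s+1)-\overline{v}_f(s)$, for $s=0,\ldots,n-1$, are equal (running $k$ from $n$ down to $1$ sweeps $s=0,\ldots,n-1$), which is exactly the statement that $(\overline{v}_f(s))_{s=0}^n$ is an arithmetic progression. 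Thus $(a)\Leftrightarrow(b)$ falls out directly from the first formula in (\ref{eq:sb79s}), with essentially no computation.

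As an alternative, one could route $(a)\Leftrightarrow(c)$ directly through the second equality in (\ref{eq:sb79s}), $I(f,k)=\sum_{s=1}^{n-k+1}{n-k\choose s-1}\overline{m}_f(s)$: setting $j=n-k$, this is $I(f,k)=\sum_{s=1}^{j+1}{j\choose s-1}\overline{m}_f(s)$, a triangular linear system in the unknowns $\overline{m}_f(1),\ldots,\overline{m}_f(n)$ whose leading coefficients $\binom{j}{0}=1$ are nonzero; imposing $I(f,k)$ constant forces, by downward induction on $k$, that $\overline{m}_f(s)=0$ for $s\geqslant 2$. I would, however, present the proof via $(b)$ since it is cleaner, and simply remark that $(b)\Leftrightarrow(c)$ is the content of the two binomial-transform identities displayed before the proposition. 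There is no genuine obstacle here: the proposition is a direct corollary of Proposition~\ref{prop:s80}, and the only thing to be a little careful about is the index bookkeeping (the substitution $s\leftrightarrow n-k$ and the range $k\in[n]$ versus $s\in\{0,\ldots,n\}$), plus noting that condition $(a)$ as stated includes $k=1$ trivially so it really is $n-1$ independent equations, matching the $n-1$ conditions in $(c)$.

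\begin{proof}
By Proposition~\ref{prop:s80}, for every $k\in [n]$ we have $I(f,k)=\overline{v}_f(n-k+1)-\overline{v}_f(n-k)$. Hence, as $k$ ranges over $[n]$, the numbers $I(f,k)$ are exactly the consecutive differences $\overline{v}_f(s+1)-\overline{v}_f(s)$ for $s=0,1,\ldots,n-1$ (with $s=n-k$). Therefore $I(f,k)=I(f,1)$ for all $k\in [n]$ if and only if all these consecutive differences are equal, that is, if and only if the sequence $(\overline{v}_f(s))_{s=0}^n$ is in arithmetic progression. This proves $(a)\Leftrightarrow(b)$.

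It remains to prove $(b)\Leftrightarrow(c)$. Using the identity $\overline{v}_f(s)=\sum_{t=0}^s{s\choose t}\overline{m}_f(t)$, condition $(c)$ gives $\overline{v}_f(s)=\overline{m}_f(0)+s\,\overline{m}_f(1)$ for all $s$, so $(\overline{v}_f(s))_{s=0}^n$ is an arithmetic progression; thus $(c)\Rightarrow(b)$. Conversely, assume $(b)$, say $\overline{v}_f(s)=\alpha+\beta s$. Using the identity $\overline{m}_f(s)=\sum_{t=0}^s(-1)^{s-t}{s\choose t}\overline{v}_f(t)$, we get, for $s\geqslant 2$,
$$
\overline{m}_f(s)=\alpha\sum_{t=0}^s(-1)^{s-t}{s\choose t}+\beta\sum_{t=0}^s(-1)^{s-t}{s\choose t}\, t=\alpha\cdot 0+\beta\cdot 0=0,
$$
since $\sum_{t=0}^s(-1)^{s-t}{s\choose t}=(1-1)^s=0$ and $\sum_{t=0}^s(-1)^{s-t}{s\choose t}\, t=s\sum_{t=1}^s(-1)^{s-t}{s-1\choose t-1}=s(1-1)^{s-1}=0$ for $s\geqslant 2$. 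Hence $(b)\Rightarrow(c)$, which completes the proof.
\end{proof}
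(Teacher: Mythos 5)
Your proof is correct and follows exactly the route the paper intends: the paper states no explicit proof but says the result is "easily derived from Proposition~\ref{prop:s80} and the immediate identities" relating $\overline{v}_f$ and $\overline{m}_f$, which is precisely your argument ($(a)\Leftrightarrow(b)$ via $I(f,k)=\overline{v}_f(n-k+1)-\overline{v}_f(n-k)$, and $(b)\Leftrightarrow(c)$ via the binomial transform). The index bookkeeping and the vanishing of the alternating sums for $s\geqslant 2$ are all handled correctly.
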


\subsection{Influence index in statistics}

It can be informative to assess the influence of every order statistic on a given statistic to measure, e.g., its behavior with respect to the
extreme values. From this information we can also approximate the given statistic by a shifted $L$-statistic. Of course, for $L$-statistics
(such as Winsorized means, trimmed means, linearly weighted means, quasi-ranges, Gini's mean difference; see \cite[{\S}6.3, {\S}8.8,
{\S}9.4]{DavNag03}), the computation of the influence indexes is immediate. However, for some other statistics such as the central moments, the
indexes can be computed via (\ref{eq:dfsg5})--(\ref{eq:dfsg7}).

\begin{example}
The closest shifted $L$-statistic to the variance $\sigma^2=\frac 1n\sum_{i=1}^n(X_i-\overline{X})^2$ is given by
$$
\sigma^2_L=\frac{1-n^2}{12n(n+3)}+\sum_{k=1}^nI(\sigma^2,k)\, X_{(k)},
$$
with $I(\sigma^2,k)=(n+2)(2k-n-1)/(n^2(n+3))$, which can be computed from (\ref{eq:dfsg7}).\footnote{In terms of Gini's mean difference
\cite[{\S}9.4]{DavNag03}, $G=\frac{2}{n(n-1)}\sum_{k=1}^n(2k-n-1)X_{(k)}$, we simply obtain $\sigma^2_L=\frac{n-1}{12n(n+3)}\,\big(6(n+2)\,
G-(n+1)\big)$.} We then immediately see that the smallest and largest variables are the most influent.
\end{example}

\subsection{Normalized index and coefficient of determination}

Coming back to the interpretation of the influence index as a covariance (see {\S}3), it is natural to consider the Pearson correlation
coefficient instead of that covariance. In this respect, we note that $\sigma^2(g_k)=E(g_k^2)=I(g_k,k)=2(n+1)(n+2)$, where the latter inequality
is immediate since $g_k\in V_L$.

\begin{definition}
The \emph{normalized influence index} is the mapping
$$
r\colon\{f\in L^2(\I^n): \mbox{$f$ is non constant}\}\times [n]\to\R
$$
defined by
$$
r(f,k)=\frac{I(f,k)}{\sigma(f)\,\sqrt{2(n+1)(n+2)}}
$$
\end{definition}

From this definition it follows that $-1\leqslant r(f,k)\leqslant 1$, where the bounds are tight. Moreover, this index remains unchanged under
interval scale transformations, that is, $r(af+b,k)=r(f,k)$ for all $a>0$ and $b\in\R$. Finally, we also have $r(f^d,k)=r(f,n-k+1)$.

The \emph{coefficient of determination} of the best shifted $L$-statistic approximation of a non constant function $f\in L^2(\I^n)$ is defined
by $R^2(f)=\sigma^2(f_L)/\sigma^2(f)$. We then have
$$
R^2(f)=\frac{1}{\sigma^2(f)}\,\sigma^2\bigg(\sum_{j=1}^{n+1}a_j\,
x_{(j)}\bigg)=\frac{1}{\sigma^2(f)}\,\mathbf{a}^T(M-\mathbf{c}\mathbf{c}^T)\,\mathbf{a},
$$
where $\mathbf{c}$ is the $(n+1)$st column of $M$.

\section*{Acknowledgments}

The authors wish to thank Samuel Nicolay for fruitful discussions. This research is supported by the internal research project F1R-MTH-PUL-09MRDO of the University of Luxembourg.


\end{document}